\theoremstyle{plain}
\newtheorem{theorem}{Theorem}[section]
\newtheorem{corollary}[theorem]{Corollary}
\newtheorem{proposition}[theorem]{Proposition}
\theoremstyle{definition}
\theoremstyle{remark}
\newtheorem*{remark}{Remark}
\title{Elementary numerical methods for double integrals}
\author[1]{Cameron Grant}
\author[2]{Erik Talvila}
\affil[1]{Simon Fraser University\\
	Burnaby, BC Canada V5A 1S6}
\affil[2]{University of the Fraser Valley\\
	Abbotsford, BC Canada V2S 7M8}
\newcommand{\N}{{\mathbb N}}
\newcommand{\R}{{\mathbb R}}
\newcommand{\fn}{\!:\!}
\providecommand{\abs}[1]{\lvert#1\rvert}
\providecommand{\norm}[1]{\lVert#1\rVert}
\newcommand{\intab}{\int_a^b}
\newcommand{\intabcd}{\int_a^b\int_c^d}
\newcommand{\ft}{{\tilde f}}
\newcommand{\alphabar}{{\overline\alpha}}
\newcommand{\betabar}{{\overline\beta}}
\begin{document}

\maketitle

\begin{abstract}
Approximations to the integral $\int_a^b\int_c^d f(x,y)\,dy\,dx$ are obtained under the
assumption that the partial derivatives of the integrand are in an $L^p$ 
space, for some $1\leq p\leq\infty$.  
We assume ${\lVert f_{xy}\rVert}_p$ is bounded (integration over $[a,b]\times[c,d]$),
assume ${\lVert f_x(\cdot,c)\rVert}_p$ and ${\lVert f_x(\cdot,d)\rVert}_p$ are bounded (integration over $[a,b]$),
and assume ${\lVert f_y(a,\cdot)\rVert}_p$ and ${\lVert f_y(b,\cdot)\rVert}_p$ are bounded (integration over $[c,d]$).
The methods are elementary, using only integration by parts and H\"older's inequality.
Versions of the trapezoidal rule, composite trapezoidal rule, midpoint rule and composite
midpoint rule are given, with error estimates in terms of the above norms.
\end{abstract}

\section{Introduction}\label{sectionintroduction}
In this paper, we derive versions of the trapezoidal rule and
midpoint rule for double integrals over finite rectangles.  In order to generate an
error estimate for a quadrature rule, it is necessary to assume
something about the integrand other than mere integrability.
If $f$ is a real-valued function on the rectangle $\Omega=[a,b]\times[c,d]$,
then we give numerical integration formulas for $\intabcd f(x,y)\,dy\,dx$
under the assumption that the mixed partial derivative $f_{xy}$ is
in one of the Lebesgue spaces $L^p(\Omega)$ for some 
$1\leq p\leq\infty$.  (When $p=\infty$, this includes the case
of continuously differentiable $f$.)  We also assume the first order partial
derivatives $f_x$ and $f_y$ are in an $L^p$ space when integrated 
over just $x$ or $y$, respectively.  The methods being presented are elementary, depending only
on H\"older's inequality and integration by parts.

Our results are stated for Lebesgue integrals.  A suitable reference is
\cite{benedetto}.  By considering $f$ to have
continuous second partial derivatives the reader can easily transfer results to
the Riemann integral.

The basis of our method is to take $\phi$ to be a function smooth enough
so that we can carry out integration by parts on $\intabcd f_{xy}(x,y)\phi(x,y)\,dy\,dx$.
If $\phi$ is chosen so that $\phi_{xy}\!=\!1$, then this leads to a formula
relating $\intabcd f(x,y)dydx$ to integrals of derivatives of $f$ multiplied
by $\phi$ or its derivatives  (Proposition~\ref{propparts}).  H\"older's inequality
then gives estimates of the error in terms of $L^p$ norms of $f_x$,
$f_y$, and $f_{xy}$.  Various choices for $\phi$ lead to a double
integral version of the trapezoidal rule, composite trapezoidal rule (Section~\ref{sectiontrapezoidal}),
midpoint rule, and composite midpoint rule (Section~\ref{sectionmidpoint}).  In
Section~\ref{sectionminimize}, we show that when $1<p<\infty$ the unique choice of
$\phi$ that minimizes the error coefficient of $\norm{f_{xy}}_p$ is the same as the choice
that gives the trapezoidal rule.

The literature on one-variable numerical integration is vast; however,
the literature on several-variable numerical integration is sparse.\!
General overviews to the problems of numerical approximation of
multiple integrals are contained in \cite{haber} and \cite{ueberhuber}.
Three sources that use the integration by parts method are Mikeladze \cite{mikeladze}, Sard \cite{sard}, and
Stroud \cite{stroud}.   We extend the results in these papers by considering $f_{xy}\in L^p(\Omega)$
for all $1\leq p\leq \infty$, by computing error estimates, and by establishing conditions under
which the error is minimized.


\section{Background}\label{sectionparts}
First we present the basic integration by parts formula that will be used throughout the paper.  Then we
look at minimal conditions under which it holds.  
\begin{proposition}[Integration by Parts]\label{propparts}
	Suppose $f$ and $\phi$ are $C^2$ functions on $[a,b]\times[c,d]$,  then
	\begin{align}
	&\intabcd f(x,y)\phi_{xy}(x,y)\,dy\,dx\label{integral}\\
	&\qquad=f(a,c)\phi(a,c)+f(b,d)\phi(b,d)-f(a,d)\phi(a,d)-f(b,c)\phi(b,c)\label{quad}\\
	&\qquad+\int_a^b\left[f_x(x,c)\phi(x,c)-f_x(x,d)\phi(x,d)\right]dx\label{error1}\\
	&\qquad+\int_c^d\left[f_y(a,y)\phi(a,y)-f_y(b,y)\phi(b,y)\right]dy\label{error2}\\
	&\qquad+\intabcd f_{xy}(x,y)\phi(x,y)\,dy\,dx.\label{error3}
	\end{align}
\end{proposition}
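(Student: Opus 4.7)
My plan is to prove the identity by iterating the one-variable integration by parts formula, once in each variable. Because $f$ and $\phi$ are $C^2$ on $[a,b]\times[c,d]$, every one-variable integrand that will arise is continuous in the remaining variable, so the classical integration by parts formula applies on each slice and Fubini's theorem justifies the exchange of order of integration needed below.

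In the first stage, I would apply integration by parts to the inner integral $\int_c^d f(x,y)\phi_{xy}(x,y)\,dy$, using $u = f$ and $dv = \phi_{xy}\,dy$ so that $v = \phi_x$. This yields the boundary contribution $f(x,d)\phi_x(x,d) - f(x,c)\phi_x(x,c)$ together with a residual integral $-\int_c^d f_y(x,y)\phi_x(x,y)\,dy$. Integrating over $x\in[a,b]$, I would then apply integration by parts a second time in $x$ on each of the two one-variable pieces $\int_a^b f(x,d)\phi_x(x,d)\,dx$ and $\int_a^b f(x,c)\phi_x(x,c)\,dx$; these produce the four corner values in line \eqref{quad} along with the contributions to line \eqref{error1}. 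For the residual double integral I would swap the order of integration by Fubini and integrate by parts in $x$ on $\int_a^b f_y(x,y)\phi_x(x,y)\,dx$, which produces the boundary-in-$x$ single integrals assembled into line \eqref{error2} and leaves behind exactly the double integral \eqref{error3}.

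No single step poses a genuine difficulty, so the main obstacle is simply sign bookkeeping. Each pass of integration by parts introduces a minus sign, and the four corner values in \eqref{quad} each receive a contribution from two different boundary terms produced in the second pass. I would verify the final assembly by listing the eight corner contributions in a small table and checking that the plus and minus signs line up with the symmetric pattern $+f(a,c)\phi(a,c)+f(b,d)\phi(b,d)-f(a,d)\phi(a,d)-f(b,c)\phi(b,c)$, and similarly that the two residual single integrals appear with the combinations $f_x(\cdot,c)\phi(\cdot,c)-f_x(\cdot,d)\phi(\cdot,d)$ and $f_y(a,\cdot)\phi(a,\cdot)-f_y(b,\cdot)\phi(b,\cdot)$ stated in \eqref{error1} and \eqref{error2}.
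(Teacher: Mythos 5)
Your proposal is correct and is essentially the paper's argument: two passes of one-variable integration by parts glued together with the Fubini--Tonelli theorem, all of which is legitimate since $f$ and $\phi$ are $C^2$. The only cosmetic difference is the direction — the paper starts from $\intabcd f_{xy}(x,y)\phi(x,y)\,dy\,dx$ and moves derivatives onto $\phi$, whereas you start from $\intabcd f(x,y)\phi_{xy}(x,y)\,dy\,dx$ and move them onto $f$, so your final residual is $\intabcd f_{yx}(x,y)\phi(x,y)\,dy\,dx$, which equals \eqref{error3} by equality of mixed partials for $C^2$ functions — and the sign bookkeeping closes up either way.
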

The proposition is proved using integration by parts and the Fubini--Tonelli theorem.   
See Proposition~\ref{propconditions} below for weaker conditions under which it holds.

If we now choose $\phi$ such that $\phi_{xy}=1$, then \eqref{integral}
and \eqref{quad} give a quadrature formula for $\intabcd f(x,y)\,dy\,dx$
with error in 
\eqref{error1}--\eqref{error3}.  To estimate the integrals in the 
error we assume $f_x$, $f_y$ and $f_{xy}$ are in $L^p$ spaces.

What are the solutions of the partial differential equation
$\phi_{xy}=1$?  They are $\phi(x,y)=xy+\alpha(x)+\beta(y)$ where
$\alpha$ and $\beta$ are differentiable functions of one variable.
We will make different choices for $\alpha$ and $\beta$ to derive
trapezoidal and midpoint rules and also to minimize the resulting
error terms.  

Error estimates arise from H\"older's inequality.
We use the $p$-norms
$$
\norm{f}_p=\left(\intabcd\abs{f(x,y)}^p\,dy\,dx\right)^{1/p}
$$ 
for $1\leq p<\infty$ and
$\norm{f}_\infty={\rm ess\,sup}_{(x,y)\in[a,b]\times[c,d]}\abs{f(x,y)}$
in the case $p=\infty$.  This reduces to the maximum of $\abs{f(x,y)}$
when $f$ is continuous.  Also, the one-variable norms for $1\leq p<\infty$ are
$$
\norm{f(\cdot,e_2)}_p=\left(\int_a^b\abs{f(x,e_2)}^p\,dx\right)^{1/p}\!\!
\text{ and }
\norm{f(e_1,\cdot)}_p=\left(\int_c^d\abs{f(e_1,y)}^p\,dy\right)^{1/p}
$$
where $e_2\in[c,d]$ and $e_1\in[a,b]$, with similar definitions when
$p=\infty$.

Denote the absolutely continuous functions on $[a,b]$ by $AC[a,b]$ and
the absolutely continuous functions on $[c,d]$ by $AC[c,d]$.

If $1<p<\infty$, then $p$ and $q$ are conjugate 
exponents if $1/p+1/q=1$.  The pairs $(p,q)=(1,\infty)$ and
$(\infty,1)$ are also conjugate.

\begin{proposition}\label{propholder}
	Suppose $f$ and $\phi$ satisfy the conditions of Proposition~\ref{propconditions} and
	for some $1\leq p\leq\infty$ the
	following norms exist: $\norm{f_{xy}}_p$, $\norm{f_x(\cdot,c)}_p$,
	$\norm{f_x(\cdot,d)}_p$, $\norm{f_y(a,\cdot)}_p$, and
	$\norm{f_y(b,\cdot)}_p$.  Suppose $\phi(x,y)=xy+\alpha(x)+\beta(y)$
	for $\alpha\in AC[a,b]$ and $\beta\in AC[c,d]$.  Then
	\begin{align*}
	&\intabcd f(x,y)\,dy\,dx\\
	&\quad=f(a,c)\phi(a,c)+f(b,d)\phi(b,d)-f(a,d)\phi(a,d)-f(b,c)\phi(b,c)
	+E(f,\phi)
	\end{align*}
	where 
	\begin{align*}
	&E(f,\phi)=\int_a^b\left[f_x(x,c)\phi(x,c)-f_x(x,d)\phi(x,d)\right]dx\\
	&\quad+\int_c^d\left[f_y(a,y)\phi(a,y)-f_y(b,y)\phi(b,y)\right]dy
	+\intabcd f_{xy}(x,y)\phi(x,y)\,dy\,dx
	\end{align*}
	and
	\begin{eqnarray*}
		\abs{E(f,\phi)} & \leq & \norm{f_x(\cdot,c)}_p\norm{\phi(\cdot,c)}_q
		+\norm{f_x(\cdot,d)}_p\norm{\phi(\cdot,d)}_q
		+\norm{f_y(a,\cdot)}_p\norm{\phi(a,\cdot)}_q\\
		& & \quad+\norm{f_y(b,\cdot)}_p\norm{\phi(b,\cdot)}_q
		+\norm{f_{xy}}_p\norm{\phi}_q.
	\end{eqnarray*}
	Here, $p$ and $q$ are conjugate exponents.
\end{proposition}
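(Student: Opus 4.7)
The plan is to reduce the proposition to two ingredients: the integration by parts identity in Proposition~\ref{propparts} (strictly, its extension to the weaker hypotheses promised in Proposition~\ref{propconditions}), and five applications of H\"older's inequality. First I would observe that the given $\phi(x,y)=xy+\alpha(x)+\beta(y)$ with $\alpha\in AC[a,b]$ and $\beta\in AC[c,d]$ satisfies $\phi_{xy}=1$ almost everywhere on $\Omega$, since differentiating with respect to $x$ gives $\phi_x=y+\alpha'(x)$ and then differentiating with respect to $y$ gives $1$. Consequently the left side \eqref{integral} of the parts formula collapses to $\intabcd f(x,y)\,dy\,dx$, and transposing the boundary term \eqref{quad} across the equality produces exactly the quadrature identity claimed in the statement, with $E(f,\phi)$ being the sum of the three remaining terms \eqref{error1}--\eqref{error3}.

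Next I would bound $|E(f,\phi)|$ by the triangle inequality applied to its five component integrals and then apply H\"older's inequality termwise with the conjugate pair $(p,q)$. For the two single-variable integrals in \eqref{error1}, H\"older on $[a,b]$ gives $|\int_a^b f_x(x,c)\phi(x,c)\,dx|\le\norm{f_x(\cdot,c)}_p\norm{\phi(\cdot,c)}_q$ and similarly for the $y=d$ term. The two single-variable integrals in \eqref{error2} are bounded analogously using H\"older on $[c,d]$. Finally the double integral in \eqref{error3} is bounded by $\norm{f_{xy}}_p\norm{\phi}_q$ using H\"older on $\Omega$. Summing these five estimates yields the displayed bound for $\abs{E(f,\phi)}$.

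The only real subtlety is justifying Proposition~\ref{propparts} under the stated integrability hypotheses on $f_x$, $f_y$ and $f_{xy}$ (rather than $C^2$), which is exactly what Proposition~\ref{propconditions} is cited to handle; once that is in hand, the partial derivatives along the boundary and the mixed derivative on $\Omega$ exist in the required $L^p$ senses and the boundary traces $\phi(x,c)$, $\phi(x,d)$, $\phi(a,y)$, $\phi(b,y)$ are absolutely continuous (hence in every $L^q$ on the relevant interval), so that each H\"older application is legitimate and every norm appearing on the right is finite. All other steps are routine.
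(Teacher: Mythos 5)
Your proof is correct and follows exactly the same route as the paper's own (one-line) proof: apply Proposition~\ref{propparts} under the hypotheses of Proposition~\ref{propconditions} with $\phi_{xy}=1$, then bound the five error terms by H\"older's inequality. You have simply spelled out the details the paper leaves implicit.
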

\begin{proof}
	This follows from Proposition~\ref{propparts}, Proposition~\ref{propconditions}, and H\"older's
	inequality.
\end{proof}

Now we consider weaker conditions under which the formula in Proposition~\ref{propparts}
holds.  Note that the integration by parts formula
$\int_a^bf'(x)\phi(x)\,dx=f(b)\phi(b)-f(a)\phi(a)-\int_a^bf(x)\phi'(x)\,dx$ holds for Lebesgue
integrals when $f$ and $\phi$ are in $AC[a,b]$. See \cite[Theorem~4.6.3]{benedetto}.

If $f_{xy}\in L^1([a,b]\times[c,d])$ and if
$\phi\in L^\infty([a,b]\times[c,d])$, then,
by the Fubini--Tonelli Theorem, the two iterated integrals equal the double integral:
\begin{eqnarray*}
	\int_a^b\left(\int_c^df_{xy}(x,y)\phi(x,y)\,dy\right)dx & = & 
	\int_c^d\left(\int_a^bf_{xy}(x,y)\phi(x,y)\,dx\right)dy\\
	& = &
	\intabcd f_{xy}(x,y)\phi(x,y)\,dy\,dx.
\end{eqnarray*}
From now on we can omit the parentheses in iterated integrals.
We also assume $f\in L^1(\Omega)$.

A sufficient condition for equality $f_{xy}=f_{yx}$ almost everywhere on $\Omega$ is that
$f_x$ and $f_y$ exist on $\Omega$ and $f_{xx}$, $f_{xy}$, $f_{yx}$ and $f_{yy}$ exist
almost everywhere.  This condition is due to Currier \cite{currier}.  Continuity of the 
mixed partial derivatives also ensures their equality everywhere.

For fixed $x\in[a,b]$, we can integrate by parts to get
\begin{align}
&\int_c^df_{xy}(x,y)\phi(x,y)\,dy\label{yfubinicond1}\\ 
&\qquad=f_x(x,d)\phi(x,d)-f_x(x,c)\phi(x,c)
-\int_c^d f_x(x,y)\phi_y(x,y)\,dy.\label{yfubinicond2}
\end{align}
By the Fundamental Theorem of Calculus for Lebesgue integrals,
this holds if
\begin{equation}
f_x(x,\cdot), \phi(x,\cdot)\in AC[c,d]\quad\text{ for almost all } x\in(a,b).\label{AC1}
\end{equation}
We would now like to integrate \eqref{yfubinicond1} and \eqref{yfubinicond2} over
$x\in[a,b]$.  Since $f_{xy}\in L^1(\Omega)$ and $\phi\in L^\infty(\Omega)$, we know
we can do this in \eqref{yfubinicond1}.  Hence, we can also do this in \eqref{yfubinicond2}.
To integrate each term in \eqref{yfubinicond2} separately, we also assume 
$f_{x}\in L^1(\Omega)$ and $\phi_y\in L^\infty(\Omega)$.  We then get
\begin{eqnarray}
\quad\intabcd f_{xy}(x,y)\phi(x,y)\,dy\,dx & = &
\int_a^b\left[f_x(x,d)\phi(x,d)-f_x(x,c)\phi(x,c)\right]dx\notag \\
& & \quad-\intabcd f_x(x,y)\phi_y(x,y)\,dy\,dx.\label{fubinicond2}
\end{eqnarray}
Since $f_{x}\in L^1(\Omega)$ and $\phi_y\in L^\infty(\Omega)$, the Fubini--Tonelli
Theorem allows us to reverse the integration order in \eqref{fubinicond2}.  If
$f(\cdot,y),\phi(\cdot,y)\in AC[a,b]$ for almost all $y\in[c,d]$, then we can
integrate by parts:
\begin{align}
&\intabcd f_x(x,y)\phi_y(x,y)\,dy\,dx
=\int_c^d\int_a^b f_x(x,y)\phi_y(x,y)\,dx\,dy\label{xfubinicond1}\\
&\qquad=\int_c^d[f(b,y)\phi_y(b,y)-f(a,y)\phi_y(a,y)]\,dy \notag\\
&\qquad\qquad-\int_c^d\int_a^b f(x,y)\phi_{xy}(x,y)\,dx\,dy.\label{xfubinicond2}
\end{align}
We have also integrated \eqref{xfubinicond1} and \eqref{xfubinicond2} over $x\in[a,b]$.
This is valid under the assumptions $f\in L^1(\Omega)$ and $\phi_{xy}\in L^\infty(\Omega)$.

These conditions are collected in the following proposition, noting that we could have
performed the initial integration by parts over $x$ instead of over $y$.
\begin{proposition}\label{propconditions}
	Consider the following properties:
	\begin{enumerate}
		\item[(i)]
		$g_x\in L^1(\Omega)$; $g_x(x,\cdot)\in AC[c,d]$ for almost all $x\in[a,b]$;
		$g(\cdot,y)\in AC[a,b]$ for almost all $y\in[c,d]$,
		\item[(ii)]
		$g_y\in L^1(\Omega)$; $g_y(\cdot,y)\in AC[a,b]$ for almost all $y\in[c,d]$;
		$g(x,\cdot)\in AC[c,d]$ for almost all $x\in[a,b]$.
	\end{enumerate}
	Assume $f_x$ and $f_y$ exist on $\Omega$ such that $f_{xx}$, $f_{xy}$, $f_{yx}$, and $f_{yy}$ exist
	almost everywhere.  Then $f_{xy}=f_{yx}$ almost everywhere.  Assume also that $f, f_{xy}\in L^1(\Omega)$ and $\phi, \phi_{xy}\in L^\infty(\Omega)$.  Now suppose if $f$ satisfies (i), then $\phi$ satisfies (ii), and
	if $f$ satisfies (ii), then $\phi$ satisfies (i).
	Then the formula in Proposition~\ref{propparts}
	holds.
\end{proposition}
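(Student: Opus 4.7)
The plan is to formalize, with explicit book-keeping of integrability, the four-step chain already sketched in the discussion preceding the proposition. Currier's theorem \cite{currier} immediately gives $f_{xy}=f_{yx}$ almost everywhere, so it makes no difference whether we start the integration by parts in $y$ or in $x$. I would handle in detail the case where $f$ satisfies (i) and $\phi$ satisfies (ii); the other case is obtained verbatim by swapping the roles of $x$ and $y$.

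First I would fix an $x\in[a,b]$ for which both $f_x(x,\cdot)\in AC[c,d]$ (from (i) applied to $f$) and $\phi(x,\cdot)\in AC[c,d]$ (from (ii) applied to $\phi$); this set of $x$ is of full measure. The one-variable Lebesgue integration by parts formula (Theorem~4.6.3 of \cite{benedetto}) yields
$$\int_c^d f_{xy}(x,y)\phi(x,y)\,dy=f_x(x,d)\phi(x,d)-f_x(x,c)\phi(x,c)-\int_c^d f_x(x,y)\phi_y(x,y)\,dy.$$
Next, integrate this equality over $x\in[a,b]$. The double integral of $f_{xy}\phi$ exists and equals the iterated integral by Fubini--Tonelli, using $f_{xy}\in L^1(\Omega)$ and $\phi\in L^\infty(\Omega)$; the two boundary terms are integrable in $x$ because $f_x(\cdot,c),f_x(\cdot,d)$ are (thanks to (i)) and $\phi$ is bounded; hence the remaining double integral of $f_x\phi_y$ exists as well, and Fubini--Tonelli justifies reversing its order.

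Now I would perform the second integration by parts, this time in $x$, on the inner integral $\int_a^b f_x(x,y)\phi_y(x,y)\,dx$. For almost every $y$, both $f(\cdot,y)\in AC[a,b]$ (from (i)) and $\phi_y(\cdot,y)\in AC[a,b]$ (from (ii), applied to $\phi_y$ regarded as the $y$-derivative of $\phi$; the hypothesis $\phi_{xy}\in L^\infty(\Omega)$ ensures the resulting double integral is in $L^1$). Integration by parts gives
$$\int_a^b f_x\phi_y\,dx=f(b,y)\phi_y(b,y)-f(a,y)\phi_y(a,y)-\int_a^b f(x,y)\phi_{xy}(x,y)\,dx.$$
Integrating in $y$ and invoking Fubini--Tonelli once more (valid since $f\in L^1(\Omega)$ and $\phi_{xy}\in L^\infty(\Omega)$), then substituting everything back, reconstructs exactly the five-piece identity \eqref{integral}--\eqref{error3} of Proposition~\ref{propparts}.

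The main obstacle is not conceptual but bureaucratic: every single one of the four applications of Fubini--Tonelli and the two applications of integration by parts requires checking either an $L^1$ product hypothesis or an absolute-continuity-on-slices hypothesis, and one has to verify that the two halves of the conjugate pair (i)/(ii) distribute these hypotheses between $f$ and $\phi$ in precisely the complementary way needed at each step. The symmetry of (i) and (ii) has been arranged so that this works out, but one must be vigilant not to confuse, for example, the role of $f_x(x,\cdot)\in AC[c,d]$ in the first integration by parts with that of $\phi_y(\cdot,y)\in AC[a,b]$ in the second.
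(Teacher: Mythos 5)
Your proposal reproduces the paper's own argument, which is exactly the chain of computations laid out in the discussion preceding the proposition: slice-wise integration by parts in $y$ under \eqref{AC1}, integration over $x$ with Fubini--Tonelli, reversal of order in the $f_x\phi_y$ integral, a second integration by parts in $x$, and a final application of Fubini--Tonelli using $f\in L^1(\Omega)$ and $\phi_{xy}\in L^\infty(\Omega)$. One bookkeeping step, however, is justified incorrectly: you claim the boundary terms $f_x(\cdot,c)\phi(\cdot,c)$ and $f_x(\cdot,d)\phi(\cdot,d)$ are individually integrable ``thanks to (i),'' but (i) only gives $f_x\in L^1(\Omega)$, which controls almost every horizontal slice and says nothing about the two exceptional slices $y=c$ and $y=d$; you then use this to deduce that $\iint f_x\phi_y$ exists, so the subtraction is run in the wrong direction. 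The paper argues the other way around: the integral over $x$ of the whole right-hand side of \eqref{yfubinicond2} exists because it equals the integrable left-hand side \eqref{yfubinicond1}; then $f_x\in L^1(\Omega)$ together with boundedness of $\phi_y$ makes the double integral $\iint f_x\phi_y$ finite, and the \emph{combined} boundary expression is integrable by subtraction. Since only that combination appears in \eqref{error1}, nothing more is needed. This is a one-line repair and does not affect the structure of your argument, but as written that step does not follow from the stated hypotheses.
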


Note that since our rectangle is finite, we have $L^s\subset L^r$ when $s>r$.  When we write
$\phi(x,y)=xy +\alpha(x)+\beta(y)$, all of the
conditions on $\phi$ are satisfied when $\alpha\in AC[a,b]$ and $\beta\in AC[c,d]$.

If we are willing to use a Riemann--Stieltjes integral, then an integration by
parts formula is 
$\int_a^bf'(x)\phi(x)\,dx=f(b)\phi(b)-f(a)\phi(a)-\int_a^bf(x)\,d\phi(x)$, provided
$f$ is continuous and 
$\phi$ is of bounded variation.  There is a related formula when $f$ is merely regulated, i.e.
it has left and right limits at each point.  See \cite{mcleod}.  With this formulation,
the conditions on $f$ in Proposition~\ref{propconditions} can be weakened as long as
the conditions on $\phi$ are suitably strengthened.

\section{Trapezoidal Rule}\label{sectiontrapezoidal}
For a function of one variable, a trapezoidal rule is
$\int_a^b g(x)\,dx =[g(a)+g(b)](b-a)/2 +E(g)$, where
$E(g)=-\int_a^b g'(x)(x-c)\,dx$, and $c$ is the midpoint of $[a,b]$.  
This follows from
integration by parts. See \cite[Theorem~1.8]{cruzuribeJIPAM}.
H\"older's inequality, then, gives the estimate
$$
\abs{E(g)}  \leq  \left\{\begin{array}{cl}
\frac{1}{2}\norm{g'}_1(b-a), & p=1\\
\frac{1}{2}\left(q+1\right)^{-1/q}\norm{g'}_p
(b-a)^{1+1/q}, & 
1<p<\infty\\
\frac{1}{4}\norm{g'}_\infty(b-a)^2, & p=\infty,
\end{array}
\right.
$$
where, again, $p$ and $q$ are conjugate exponents.
The estimate is sharp in the sense that the coefficients of the
norms cannot be reduced.
The paper \cite{talvilawiersmaaejm} shows an integration by parts
method that can be used to derive the usual trapezoidal rule when it
is assumed $g''$ is bounded.

For a function of two variables, we choose $\phi$ so that
$f$ is evaluated at the four corners of the rectangle $[a,b]\times [c,d]$.  
For this we let $m_1$ be the midpoint of
$[a,b]$, let $m_2$ be the midpoint of $[c,d]$, and take $\phi(x,y)=
(x-m_1)(y-m_2)=xy-m_2x-m_1y+m_1m_2$ so that 
$\alpha(x)=-m_2x +m_1m_2$ and $\beta(y)=-m_1y$.

\begin{theorem}[Trapezoidal Rule]\label{theoremtrapezoidal}
	Suppose $f$ satisfies the conditions of Proposition~\ref{propconditions}, and
	for some $1\leq p\leq\infty$ the
	following norms exist:
	$\norm{f_{xy}}_p$, $\norm{f_x(\cdot,c)}_p$,
	$\norm{f_x(\cdot,d)}_p$, $\norm{f_y(a,\cdot)}_p$, and
	$\norm{f_y(b,\cdot)}_p$.
	Then we have that
	$$
	{\int_a^b\!\int_c^d} \!\!f(x,y)\,dy\,dx\!=\!\left[f(a,c)\!+\!f(b,d)\!+\!f(a,d)\!+\!f(b,c)\right]
	\frac{(b-a)(d-c)}{4}+E(f).
	$$
	If $p=1$, then
	\begin{eqnarray*}
		\abs{E(f)} & \leq &
		\left(\norm{f_x(\cdot,c)}_1
		+\norm{f_x(\cdot,d)}_1\right)\frac{
			(b-a)(d-c)}{4}\\
		& & \quad+\left(\norm{f_y(a,\cdot)}_1
		+\norm{f_y(b,\cdot)}_1\right)\frac{(b-a)(d-c)}{4}\\
		& & \quad +\frac{\norm{f_{xy}}_1(b-a)(d-c)}{4}.
	\end{eqnarray*}
	
	If $1<p<\infty$, then
	\begin{eqnarray*}
		\abs{E(f)} & \leq &
		\left(\norm{f_x(\cdot,c)}_p
		+\norm{f_x(\cdot,d)}_p\right)\frac{(d-c)
			(b-a)^{2-1/p}}{4}\left(\frac{p-1}{2p-1}
		\right)^{1-1/p}\\
		& & \quad+\left(\norm{f_y(a,\cdot)}_p
		+\norm{f_y(b,\cdot)}_p\right)\frac{(b-a)(d-c)^
			{2-1/p}}{4}\left(\frac{p-1}{2p-1}
		\right)^{1-1/p}\\
		& & \quad+\frac{\norm{f_{xy}}_p(b-a)^{2-1/p}(d-c)^{2-1/p}}{4}\left(\frac{p-1}{2p-1}\right)^{2(1-1/p)}.
	\end{eqnarray*}
	
	If $p=\infty$, then
	\begin{eqnarray*}
		\abs{E(f)} & \leq &
		\left(\norm{f_x(\cdot,c)}_\infty
		+\norm{f_x(\cdot,d)}_\infty\right)\frac{
			(b-a)^2(d-c)}{8}\\
		& & \quad+\left(\norm{f_y(a,\cdot)}_\infty
		+\norm{f_y(b,\cdot)}_\infty\right)\frac{(b-a)(d-c)^2}{8}\\
		& & \quad +\frac{\norm{f_{xy}}_\infty(b-a)^2(d-c)^2}{16}.
	\end{eqnarray*}
\end{theorem}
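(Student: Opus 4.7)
The plan is to apply Proposition~\ref{propholder} with the specific choice $\phi(x,y)=(x-m_1)(y-m_2)$ suggested in the paragraph preceding the theorem, then carry out the five norm computations that the estimate in Proposition~\ref{propholder} demands. First I would verify that $\phi_{xy}=1$ and evaluate $\phi$ at the four corners: since $\lvert a-m_1\rvert=\lvert b-m_1\rvert=(b-a)/2$ and $\lvert c-m_2\rvert=\lvert d-m_2\rvert=(d-c)/2$, each of $\phi(a,c),\phi(b,d),\phi(a,d),\phi(b,c)$ equals $\pm(b-a)(d-c)/4$ with signs arranged so that the boundary sum from Proposition~\ref{propholder} collapses into the stated trapezoidal quadrature $[f(a,c)+f(b,d)+f(a,d)+f(b,c)](b-a)(d-c)/4$.

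Next I would compute the elementary one-variable norm $\lVert x-m_1\rVert_q$ on $[a,b]$. A direct integration (splitting about $m_1$ and doubling) gives $\lVert x-m_1\rVert_q=(b-a)^{1+1/q}/\bigl(2(q+1)^{1/q}\bigr)$ for $1\le q<\infty$, and $(b-a)/2$ for $q=\infty$; the analogous identity holds for $\lVert y-m_2\rVert_q$ on $[c,d]$. Because $\phi$ restricted to any of the four edges is a constant times $(x-m_1)$ or $(y-m_2)$, and because $\phi$ itself factors as $(x-m_1)(y-m_2)$, the five $q$-norms required in Proposition~\ref{propholder} reduce to products:
\begin{align*}
\lVert\phi(\cdot,c)\rVert_q&=\lVert\phi(\cdot,d)\rVert_q=\tfrac{d-c}{2}\lVert x-m_1\rVert_q,\\
\lVert\phi(a,\cdot)\rVert_q&=\lVert\phi(b,\cdot)\rVert_q=\tfrac{b-a}{2}\lVert y-m_2\rVert_q,\\
\lVert\phi\rVert_q&=\lVert x-m_1\rVert_q\,\lVert y-m_2\rVert_q.
\end{align*}

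Finally I would translate the exponents back into $p$. Using $1/p+1/q=1$, we get $1+1/q=2-1/p$ and $(q+1)^{-1/q}=\bigl((p-1)/(2p-1)\bigr)^{1-1/p}$, which is the form appearing in the stated bounds. Substituting these factored norms into the estimate of Proposition~\ref{propholder} gives the $1<p<\infty$ bound directly. The two endpoint cases follow by specialization: for $p=1$ set $q=\infty$ and use $\lVert x-m_1\rVert_\infty=(b-a)/2$; for $p=\infty$ set $q=1$ and use $\lVert x-m_1\rVert_1=(b-a)^2/4$.

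The main obstacle is purely clerical: keeping track of the conjugate-exponent algebra in Step 3 and making sure the constant $((p-1)/(2p-1))^{1-1/p}$ appears with the correct exponent (once for each boundary term, squared for the $\lVert f_{xy}\rVert_p$ term, since that norm comes from a product of two one-variable norms). There is no analytic difficulty beyond Proposition~\ref{propholder} itself; every step is a direct computation or a specialization.
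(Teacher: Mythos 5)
Your proposal is correct and follows essentially the same route as the paper: substitute $\phi(x,y)=(x-m_1)(y-m_2)$ into Proposition~\ref{propholder}, reduce the five required $q$-norms to products of the one-variable norms $\norm{x-m_1}_q=(b-a)^{1+1/q}/(2(q+1)^{1/q})$ and its analogue in $y$, and convert exponents via $1/p+1/q=1$. The only cosmetic difference is that the paper normalizes to $\psi(t)=t$ on $[-1,1]$ and rescales, whereas you integrate directly on $[a,b]$; the computations and resulting constants are identical.
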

\begin{proof}
	Putting $\phi(x,y)=(x-m_1)(y-m_2)$ into
	Proposition~\ref{propholder} yields the quadrature formula.
	
	Let $\psi(t)=t$.  Compute the norms of $\psi$ over $[-1,1]$.
	If $1\leq q<\infty$, then
	$$
	\norm{\psi}_q  =  \left(\int_{-1}^1\abs{t}^q\,dt\right)^{1/q}
	=  \left(2\int_{0}^1t^q\,dt\right)^{1/q} =\left(\frac{2}{
		q+1}\right)^{1/q}.
	$$
	If $q=\infty$, we have
	$$
	\norm{\psi}_\infty=\max_{\abs{t}\leq 1}\abs{t}=1.
	$$
	H\"older's inequality and a linear change of variables give
	$$
	\left|\int_a^bf_x(x,c)\phi(x,c)dx\right|
	\leq \norm{f_x(\cdot,c)}_p
	\left(\int_a^b\abs{x-m_1}^q\,dx\right)^{1/q}\frac{(d-c)}{2}.
	$$
	Note that
	$$
	\left(\int_a^b\abs{x-m_1}^q\,dx\right)^{1/q}\!\!\! = 
	\left(\int_{a-m_1}^{b-m_1}\!\!\!\!\!\!\!\!\!\!\abs{x}^q\,dx\right)^{1/q}
	\!\!\!=  \norm{\psi}_q\left(\frac
	{b-a}{2}\right)^{1+1/q}
	\!\!\!=  \frac
	{(b-a)^{1+1/q}}{2(q+1)^{1/q}}.
	$$
	If $p=1$ we have
	$$
	\max_{a\leq x\leq b}\abs{x-m_1}=
	\max_{a-m_1\leq x\leq b-m_1}\abs{x}=\max_{\abs{t}\leq(b-a)/2}
	\abs{\psi(t)}=\norm{\psi}_\infty\frac{b-a}{2}=\frac{b-a}{2}.
	$$
	If we observe that, for $1<p\leq\infty$,
	\begin{eqnarray*}
		\abs{E(f)} & \leq &
		\left(\norm{f_x(\cdot,c)}_p
		+\norm{f_x(\cdot,d)}_p\right)
		\norm{\psi}_q
		\left(\frac{b-a}{2}\right)^{1+1/q}
		\left(\frac{d-c}{2}\right)\\
		& & \quad+\left(\norm{f_y(a,\cdot)}_p
		+\norm{f_y(b,\cdot)}_p\right)\norm{\psi}_q\left(\frac{b-a}{2}\right)
		\left(\frac{d-c}{2}\right)^{1+1/q}\\
		& & \quad
		+\norm{f_{xy}}_p\norm{\psi}_q\left(\frac{b-a}{2}\right)^{1+1/q}\left(\frac{d-c}{2}\right)
		^{1+1/q},
	\end{eqnarray*}
	
	then the result follows upon writing $q$ in terms of
	$p$. For $p=1$, take the limit of the above expression
	as $q\to\infty$.
\end{proof}
\begin{corollary}
	If $\abs{\nabla f}\leq M$ and $\abs{f_{xy}}\leq N$ for
	some $M, N\in \R$, then
	$$
	\abs{E(f)}\leq \frac{M(b-a)^2(d-c)}{4}
	+\frac{M(b-a)(d-c)^2}{4}
	+\frac{N(b-a)^2(d-c)^2}{16}.
	$$
\end{corollary}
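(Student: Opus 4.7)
The plan is to observe that this corollary is an immediate consequence of the $p=\infty$ case of Theorem~\ref{theoremtrapezoidal}, so no new integration by parts or inequality is needed; the task is simply to bound the boundary and interior norms using the pointwise hypotheses and then add.

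First I would unpack the hypothesis $\abs{\nabla f}\leq M$. Since $\abs{\nabla f}^2 = f_x^2+f_y^2$, this gives the pointwise bounds $\abs{f_x}\leq M$ and $\abs{f_y}\leq M$ on all of $\Omega$. Taking essential suprema over the appropriate one-dimensional slices yields $\norm{f_x(\cdot,c)}_\infty,\norm{f_x(\cdot,d)}_\infty,\norm{f_y(a,\cdot)}_\infty,\norm{f_y(b,\cdot)}_\infty\leq M$. Similarly, $\abs{f_{xy}}\leq N$ gives $\norm{f_{xy}}_\infty\leq N$.

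Next I would invoke the $p=\infty$ estimate from Theorem~\ref{theoremtrapezoidal}. Substituting the bounds above into that estimate yields
\begin{align*}
\abs{E(f)} &\leq 2M\cdot\frac{(b-a)^2(d-c)}{8}+2M\cdot\frac{(b-a)(d-c)^2}{8}+\frac{N(b-a)^2(d-c)^2}{16}\\
&= \frac{M(b-a)^2(d-c)}{4}+\frac{M(b-a)(d-c)^2}{4}+\frac{N(b-a)^2(d-c)^2}{16},
\end{align*}
which is exactly the asserted inequality.

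There is no real obstacle here; the only micro-step worth mentioning is the transition from the gradient bound to the scalar bounds on $f_x$ and $f_y$, and from there to the one-dimensional $L^\infty$ norms on the four edges of $\Omega$. Since the hypotheses of Proposition~\ref{propconditions} are implicitly in force from the ambient statement of Theorem~\ref{theoremtrapezoidal}, no regularity verification is required beyond what the theorem already assumes.
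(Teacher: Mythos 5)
Your proposal is correct and matches the paper's (implicit) argument: the corollary is stated without proof precisely because it is the $p=\infty$ case of Theorem~\ref{theoremtrapezoidal} with each one-dimensional norm bounded by $M$ via $\abs{\nabla f}\leq M$ and $\norm{f_{xy}}_\infty\leq N$. The arithmetic $2M/8=M/4$ checks out, so nothing further is needed.
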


\begin{corollary}[Trapezoidal Composite Rule]\label{trapcomprule}
	Define a uniform partition of $[a,b]$ by $x_i=a+i\Delta x$ where
	$\Delta x=(b-a)/m$ for some $m\in\N$.  Then, for $0\leq i\leq m$,
	we have $a=x_0<x_1<\ldots<x_m=b$.
	Define a uniform partition of $[c,d]$ by $y_j=c+j\Delta y$ where
	$\Delta y=(d-c)/n$ for some $n\in\N$.  Then, for $0\leq j\leq n$,
	we have $c=y_0<y_1<\ldots<y_n=d$.
	Then 
	\begin{align*}
	&\int_a^b\int_c^df(x,y)\,dy\,dx=
	\Bigg[f(a,c)+f(b,d)+f(a,d)+f(b,c)\\
	&\quad+\!2\sum_{j=1}^{n-1}\!f(a,y_j)\!+\!2\sum_{j=1}^{n-1}\!f(b,y_j)
	\!+\!2\sum_{i=1}^{m-1}\!f(x_i,c)\!+\!2\sum_{i=1}^{m-1}\!f(x_i,d)\Bigg]\!\frac{(b\!-\!a)(d\!-\!c)}{4mn}\\
	&\quad+E(f).
	\end{align*}
	If $p=1$, then
	\begin{align*}
	&\abs{E(f)}\leq\\
	&\left(\norm{f_x(\cdot,c)}_1+2\sum_{j=1}^n\norm{f_x(\cdot,y_j)}_1
	+\norm{f_x(\cdot,d)}_1\right)\frac{(b-a)(d-c)}{4mn}\\
	&+\left(\norm{f_y(a,\cdot)}_1+2\sum_{i=1}^m\norm{f_y(x_i,\cdot)}_1
	+\norm{f_y(b,\cdot)}_1\right)\frac{(b-a)(d-c)}
	{4mn}\\
	&+\frac{\norm{f_{xy}}_1(b-a)(d-c)}{4mn}.
	\end{align*}
	If $1<p<\infty$, then
	\begin{align*}
	&\abs{E(f)}\leq\\
	&\left(\norm{f_x(\cdot,c)}_p\!+\!2\sum_{j=1}^n\norm{f_x(\cdot,y_j)}_p
	\!+\!\norm{f_x(\cdot,d)}_p\right)\!\frac{(d\!-\!c)
		(b\!-\!a)^{2\!-\!1/p}}{4mn}\!\left(\frac{p\!-\!1}{2p\!-\!1}
	\right)^{1\!-\!1/p}\\
	&\!+\!\left(\!\!\norm{f_y(a,\!\cdot)}_p\!\!+\!\!2\sum_{i=1}^m\norm{f_y(x_i,\!\cdot)}_p
	+\norm{f_y(b,\!\cdot)}_p\!\right)\!\!\frac{(b-a)(d-c)
		^{2-1/p}}{4mn}\!\!\left(\frac{p\!-\!1}{2p\!-\!1}
	\right)\!\!^{1\!-\!1/p}\\
	&+\frac{\norm{f_{xy}}_p(b-a)^{2-1/p}(d-c)^{2-1/p}}{4mn}\left(\frac{p-1}{2p-1}\right)^{2(1-1/p)}.
	\end{align*}
	If $p=\infty$, then
	\begin{align*}
	&\abs{E(f)}\leq\\
	&\left(\norm{f_x(\cdot,c)}_\infty+2\sum_{j=1}^n\norm{f_x(\cdot,y_j)}_\infty
	+\norm{f_x(\cdot,d)}_\infty\right)\frac{(d-c)
		(b-a)^{2}}{8mn}\\
	&+\left(\norm{f_y(a,\cdot)}_\infty+2\sum_{i=1}^m\norm{f_y(x_i,\cdot)}_\infty
	+\norm{f_y(b,\cdot)}_\infty\right)\frac{(b-a)(d-c)
		^{2}}{8mn}\\
	&+\frac{\norm{f_{xy}}_\infty(b-a)^{2}(d-c)^{2}}{16mn}.
	\end{align*}
	
	If $\abs{\nabla f}\leq M$ and $\abs{f_{xy}}\leq N$ for
	some $M, N\in \R$, then
	\begin{eqnarray*}
		\abs{E(f)} & \leq
		&
		\frac{M(2n+1)(d-c)
			(b-a)^{2}}{8mn}\\
		& &\quad +
		\frac{M(2m+1)(b-a)(d-c)
			^{2}}{8mn}\\
		& &\quad + \frac{N(b-a)^{2}(d-c)^{2}}{16mn}.
	\end{eqnarray*}
	Note that $(2n+1)/n\leq 3$ and $(2n+1)/n\sim 2$ as $n\to\infty$.
\end{corollary}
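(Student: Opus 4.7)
The plan is to decompose $[a,b]\times[c,d]$ into the $mn$ subrectangles $R_{ij}=[x_{i-1},x_i]\times[y_{j-1},y_j]$, apply Theorem~\ref{theoremtrapezoidal} to each $R_{ij}$, and then assemble the local quadrature values and error bounds.

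For the quadrature part, the trapezoidal rule on $R_{ij}$ contributes $[f(x_{i-1},y_{j-1})+f(x_i,y_{j-1})+f(x_{i-1},y_j)+f(x_i,y_j)]\,\Delta x\,\Delta y/4$, where $\Delta x=(b-a)/m$ and $\Delta y=(d-c)/n$. Summing over $(i,j)$ and grouping by incidence --- an outer corner of the full rectangle appears in one subrectangle, a non-corner boundary grid point in two, an interior grid point in four --- produces the stated quadrature formula after factoring out $(b-a)(d-c)/(4mn)$.

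The main obstacle is converting the sum of local $L^p$ error bounds back into global $L^p$ norms. For fixed $j$, the $f_x(\cdot,y_{j-1})$ contribution across $R_{ij}$, $i=1,\ldots,m$, is bounded by $a_i\cdot(\Delta y)(\Delta x)^{2-1/p}\bigl(\tfrac{p-1}{2p-1}\bigr)^{1-1/p}/4$ with $a_i=\bigl(\int_{x_{i-1}}^{x_i}\abs{f_x(x,y_{j-1})}^p\,dx\bigr)^{1/p}$. Hölder's inequality applied to $\sum_i 1\cdot a_i$ yields $\sum_{i=1}^m a_i\leq m^{1-1/p}\bigl(\sum_i a_i^p\bigr)^{1/p}$, and $\sum_i a_i^p=\norm{f_x(\cdot,y_{j-1})}_p^p$ by additivity of the Lebesgue integral. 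Substituting $\Delta x=(b-a)/m$ causes $m^{1-1/p}$ to combine with $(\Delta x)^{2-1/p}$ to give a clean $(b-a)^{2-1/p}/m$, and $\Delta y=(d-c)/n$ supplies the remaining $1/n$, producing the common $1/(mn)$ prefactor. Summing over $j$ then collects each line norm with multiplicity $1$ at $j\in\{0,n\}$ and multiplicity $2$ for $1\leq j\leq n-1$. The $f_y$ terms are handled symmetrically by swapping the roles of the two variables, and the $f_{xy}$ term uses the analogous Hölder step on the double sum $\sum_{i,j}b_{ij}$ where $b_{ij}=\bigl(\int_{x_{i-1}}^{x_i}\int_{y_{j-1}}^{y_j}\abs{f_{xy}}^p\,dy\,dx\bigr)^{1/p}$, giving the $(mn)^{1-1/p}\to 1/(mn)$ collapse while inheriting the exponent $2(1-1/p)$ on $(p-1)/(2p-1)$ from Theorem~\ref{theoremtrapezoidal}. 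The endpoint cases $p=1$ and $p=\infty$ follow either by taking limits in $q$ or by using the trivial bounds $\sum a_i$ and $\sum a_i\leq m\max_i a_i$.

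Finally, under $\abs{\nabla f}\leq M$ and $\abs{f_{xy}}\leq N$, I would specialize the $p=\infty$ estimate: every line-wise $L^\infty$ norm of $f_x$ or $f_y$ is bounded by $M$ and $\norm{f_{xy}}_\infty\leq N$, so counting the terms in the three collected sums produces the stated factors of order $n$, $m$, and a constant respectively.
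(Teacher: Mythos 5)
Your proposal is correct and follows the same basic strategy as the paper's proof: tile $[a,b]\times[c,d]$ by the $mn$ subrectangles, apply the single-panel rule on each, and reassemble. The one step you organize differently is the recombination of the error terms. The paper keeps the weight global: it takes $\phi(x,y)=U_i(x)V_j(y)$ with piecewise sawtooth factors, collects the local error integrals into integrals over all of $[a,b]$, $[c,d]$, or the whole rectangle against these piecewise weights, and computes the $L^q$ norm of the global weight, e.g.\ $\bigl(2m\int_0^{\Delta x/2}x^q\,dx\bigr)^{1/q}$. You instead invoke Theorem~\ref{theoremtrapezoidal} panel by panel to get local $L^p$ norms and then recombine them via the discrete H\"older inequality $\sum_{i=1}^m a_i\leq m^{1-1/p}\bigl(\sum_{i=1}^m a_i^p\bigr)^{1/p}$. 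The two bookkeepings are equivalent (the global weight consists of $m$ congruent pieces, so its $q$-norm is $m^{1/q}$ times the common local one), and both produce the same constants, including the $m^{1-1/p}(\Delta x)^{2-1/p}=(b-a)^{2-1/p}/m$ collapse you describe; nothing is gained or lost either way.

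One point you should not gloss over: your multiplicity count (corner $1$, non-corner boundary node $2$, interior node $4$) is right, but it does not literally ``produce the stated quadrature formula,'' because the displayed sum in the corollary omits the interior-node term $4\sum_{i=1}^{m-1}\sum_{j=1}^{n-1}f(x_i,y_j)$. That term must be present: testing $f\equiv1$, the printed bracket sums to $4(m+n-1)$ rather than $4mn$ while $E(f)=0$. So your derivation is the correct one and the printed formula is missing a term. Similarly, your boundary tally for the error sums gives $2\sum_{j=1}^{n-1}$ where the statement prints $2\sum_{j=1}^{n}$; since the norms are nonnegative, the printed version merely over-counts the line $y=d$ and still yields a valid, slightly weaker, upper bound.
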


\begin{proof}
	To obtain the integral approximation, define $\phi(x,y)=U_i(x)V_j(y)$ where $U_i(x)=(x-u_i)$ when $x\in(x_{i-1},x_{i})$
	for some $1\leq i\leq m$ and $U_i=0$ otherwise and $V_j(y)=(y-v_j)$ when $y\in(y_{j-1},y_j)$ for some $1\leq j\leq n$ and
	$V_j=0$ otherwise. Here, $u_i=(x_{i-1}+x_{i})/2$
	and $v_j=(y_{j-1}+y_j)/2$.  Now write
	$$
	\intabcd f(x,y)\,dy\,dx=\sum_{i=1}^m\sum_{j=1}^n\int_{x_{i-1}}^{x_i}\int_{y_{j-1}}^{y_j}f(x,y)\,dy\,dx,
	$$
	and apply Proposition~\ref{propholder} to each term in the sum.
	
	The error becomes
	\begin{eqnarray*}
		E(f) & = &
		-\sum_{i=1}^m\sum_{j=1}^n\left\{\int_{x_{i-1}}^{x_i}\left[f_x(x,y_{j-1})+f_x(x,y_j)\right]U_i(x)\,dx\,\frac{\Delta y}{2}
		\right.\\
		& &\quad -\int_{y_{j-1}}^{y_j}\left[f_y(x_{i-1},y)+f_y(x_i,y)\right]V_j(y)\,dy\,\frac{\Delta x}{2}\\
		& &\quad +\left.\int_{x_{i-1}}^{x_i}\int_{y_{j-1}}^{y_j}f_{xy}(x,y)U_i(x)V_j(y)\,dy\,dx\right\}\\
		& = & -\int_a^b\left[f_x(x,c)+2\sum_{j=1}^nf_x(x,y_j)+f_x(x,d)\right]U_i(x)\,dx\,\frac{\Delta y}{2}\\
		&  &\quad -\int_c^d\left[f_y(a,y)+2\sum_{i=1}^mf_y(x_i,y)+f_y(b,y)\right]V_j(y)\,dy\,
		\frac{\Delta x}{2}\\
		&  & +\quad \int_a^b\int_c^df_{xy}(x,y)U_i(x)V_j(y)\,dy\,dx.
	\end{eqnarray*}
	
	The error estimate, then, follows as in the theorem.  We can take limits as $p\to 1$ or $p\to\infty$ as in the theorem.
\end{proof}

\section{Midpoint Rule}\label{sectionmidpoint}
The midpoint rule for a function of one variable is
$\int_a^b g(x)\,dx=g(m)(b-a) +E(g)$, where $m$ is the
midpoint of interval $[a,b]$,
$E(g)=-\int_a^b g'(x)\omega(x)\,dx$, $\omega(x)=
x-a$ for $a\leq x<m$, and $\omega(x)=x-b$ for $m<x\leq b$.
This follows upon integration by parts.  Since it is used
only for integration, the value of $\omega$ at $m$ is
irrelevant.  Notice that $\omega(a)=\omega(b)=0$, $\omega$ has a jump discontinuity
at $m$, and $\omega'(x)=1$ for all $x\not=m$.\\

To construct a midpoint rule when integrating over
$[a,b]\times[c,d]$, look at the formulas in Proposition~\ref{propholder}.  We would like to choose $\phi$ to vanish on the
boundary of the rectangle.  As in the one-variable problem,
this can be done with a piecewise definition.
\begin{theorem}[Midpoint Rule]\label{theoremmidpoint}
	Suppose $f$ satisfies the conditions of Proposition~\ref{propconditions} and
	for some $1\leq p\leq\infty$ the
	following norms exist:
	$\norm{f_{xy}}_p$, $\norm{f_x(\cdot,c)}_p$,
	$\norm{f_x(\cdot,d)}_p$, $\norm{f_y(a,\cdot)}_p$, and
	$\norm{f_y(b,\cdot)}_p$.
	Let $m_1$ be the midpoint of $[a,b]$ and $m_2$
	be the midpoint of $[c,d]$.
	Then  
	$$
	\intabcd f(x,y)\,dy\,dx=f(m_1,m_2)
	(b-a)(d-c) +E(f).
	$$
	If $p=1$, then
	\begin{eqnarray*}
		\abs{E(f)} & \leq &
		\norm{f_x(\cdot,m_2)}_1\frac{
			(b-a)(d-c)}{2}\\
		& & \quad+\norm{f_y(a,\cdot)}_1
		\frac{(b-a)(d-c)}{2}\\
		& & \quad +\frac{\norm{f_{xy}}_1(b-a)(d-c)}{4}.
	\end{eqnarray*}
	
	If $1<p<\infty$, then
	\begin{eqnarray*}
		\abs{E(f)} & \leq &
		\norm{f_x(\cdot,m_2)}_p
		\frac{(d-c)
			(b-a)^{2-1/p}}{2}\left(\frac{p-1}{2p-1}
		\right)^{1-1/p}\\
		& & \quad+\norm{f_y(m_1,\cdot)}_p
		\frac{(b-a)(d-c)^
			{2-1/p}}{2}\left(\frac{p-1}{2p-1}
		\right)^{1-1/p}\\
		& & \quad+\frac{\norm{f_{xy}}_p(b-a)^{2-1/p}(d-c)^{2-1/p}}{4}\left(\frac{p-1}{2p-1}\right)^{2(1-1/p)}.
	\end{eqnarray*}
	
	If $p=\infty$, then
	\begin{eqnarray*}
		\abs{E(f)} & \leq &
		\norm{f_x(\cdot,m_2)}_\infty\frac{
			(b-a)^2(d-c)}{4}\\
		& & \quad+\norm{f_y(m_1,\cdot)}_\infty
		\frac{(b-a)(d-c)^2}{4}\\
		& & \quad +\frac{\norm{f_{xy}}_\infty(b-a)^2(d-c)^2}{16}.
	\end{eqnarray*}
\end{theorem}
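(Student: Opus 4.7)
The plan is to apply Proposition~\ref{propholder} on each of the four subrectangles determined by the midlines $x=m_1$ and $y=m_2$, using a piecewise product of one-variable midpoint weights for $\phi$. Concretely, set $\omega_1(x)=x-a$ for $x\in[a,m_1]$ and $\omega_1(x)=x-b$ for $x\in[m_1,b]$, with $\omega_2$ defined analogously on $[c,d]$, and on each subrectangle take $\phi(x,y)=\omega_1(x)\omega_2(y)$. Restricted to a single subrectangle this is a polynomial of the form $xy+\alpha(x)+\beta(y)$ with $\phi_{xy}=1$, so Proposition~\ref{propholder} is available on each piece, in direct analogy with how the composite trapezoidal rule was derived.

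First I will split $\intabcd f(x,y)\,dy\,dx$ into a sum over the four subrectangles and apply Proposition~\ref{propholder} term by term. By construction, three of the four corners of every subrectangle lie on the outer boundary of $[a,b]\times[c,d]$ where $\omega_1$ or $\omega_2$ vanishes; only the interior corner $(m_1,m_2)$ contributes, and each subrectangle yields $f(m_1,m_2)(b-a)(d-c)/4$, summing to the midpoint value $f(m_1,m_2)(b-a)(d-c)$. The boundary line integrals along the outer edges $x\in\{a,b\}$ or $y\in\{c,d\}$ vanish for the same reason, leaving only contributions along the interior midlines. Keeping track of signs, the pieces along $y=m_2$ coming from the four subrectangles combine into $-(d-c)\int_a^b f_x(x,m_2)\omega_1(x)\,dx$, the pieces along $x=m_1$ combine into $-(b-a)\int_c^d f_y(m_1,y)\omega_2(y)\,dy$, and the four $f_{xy}$ double integrals combine into $\int_a^b\int_c^d f_{xy}(x,y)\omega_1(x)\omega_2(y)\,dy\,dx$.

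Next I will apply H\"older's inequality to each of the three aggregated terms. A direct computation splitting at $m_1$ gives $\norm{\omega_1}_q=(b-a)^{1+1/q}/(2(q+1)^{1/q})$ for $1\le q<\infty$ and $\norm{\omega_1}_\infty=(b-a)/2$, with the corresponding identity for $\omega_2$. Substituting $q=p/(p-1)$ rewrites $(q+1)^{-1/q}$ as $((p-1)/(2p-1))^{1-1/p}$ and produces the stated constants for $1<p<\infty$. The endpoint cases $p=1$ and $p=\infty$ follow either by taking the limit as $q\to\infty$ or $q\to 1$ respectively, or by bounding directly with the $L^1/L^\infty$ pairing, exactly as at the end of the proof of Theorem~\ref{theoremtrapezoidal}.

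The only real obstacle is the sign bookkeeping required when the four subrectangle contributions are collected along the interior midlines and the outer boundary contributions are seen to cancel. Once this accounting is done, the H\"older estimates are essentially parallel to those already carried out for the trapezoidal rule, so no new analytic difficulty arises.
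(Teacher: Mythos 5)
Your proposal is correct and follows essentially the same route as the paper: your piecewise weight $\omega_1(x)\omega_2(y)$ is exactly the paper's function $\gamma(s)\gamma(t)$ (with $\gamma(s)=s+1$ for $s<0$, $s-1$ for $s>0$) transplanted directly to $[a,b]\times[c,d]$, the paper merely normalizing to $[-1,1]^2$ first and changing variables back at the end. The corner/edge cancellations and the norm computations you describe match the paper's, and your argument in fact produces $\norm{f_y(m_1,\cdot)}_1$ in the $p=1$ bound, consistent with the other cases (the theorem's $\norm{f_y(a,\cdot)}_1$ there appears to be a typo).
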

\begin{figure}
{\includegraphics
[width=4.988in]
{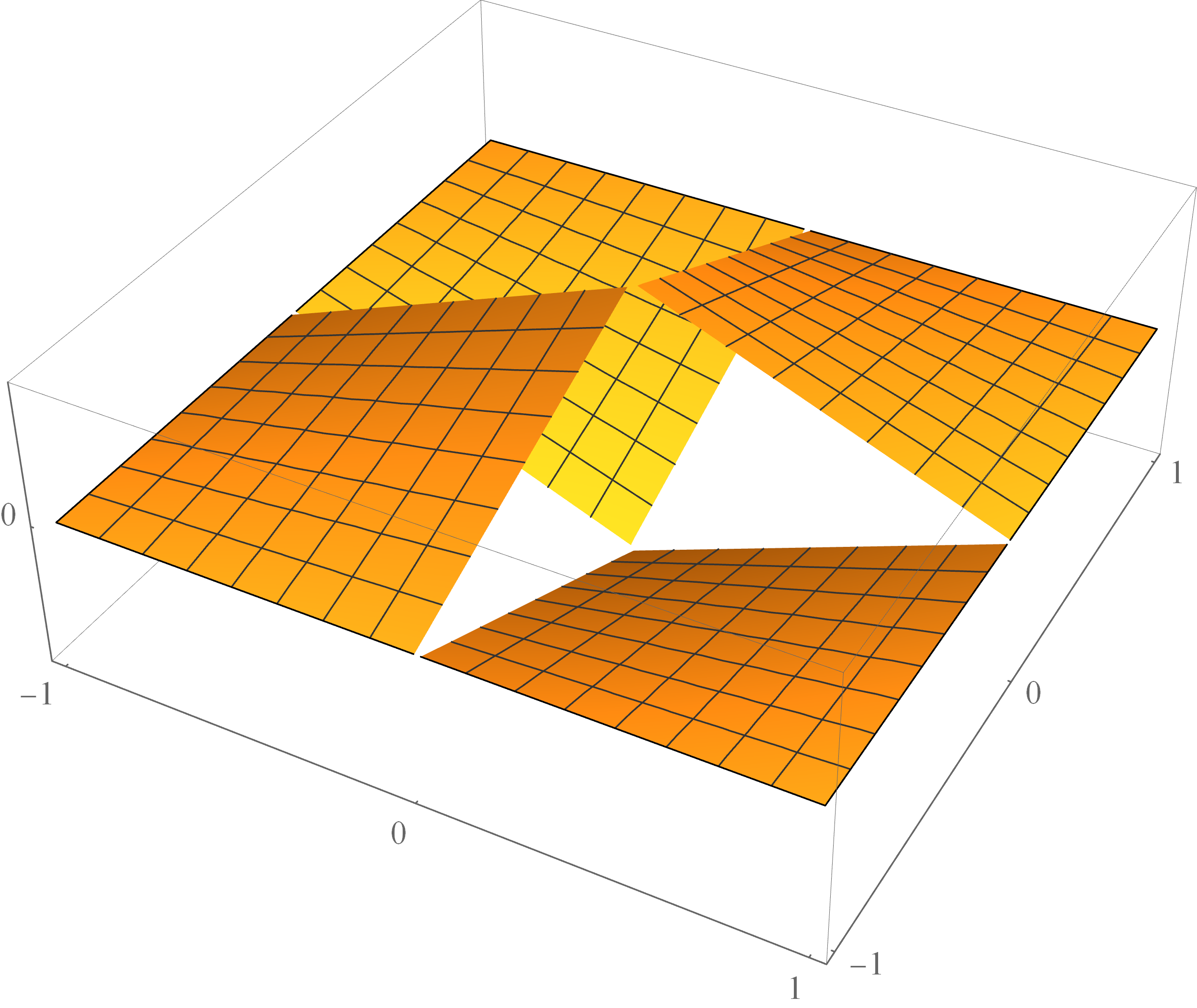}}
\caption{Midpoint rule $\phi$.\label{midpointphi}}

\end{figure}

\begin{proof}
	It is simplest to first solve the normalized problem
	when $[a,b]\times[c,d]=[-1,1]\times[-1,1]$ and the
	function to be integrated is $\ft$.
	Define 
	$$
	\phi(s,t)=\left\{\begin{array}{lll}
	(s-1)(t-1); & 0< s\leq 1, & 0< t\leq 1\\
	(s+1)(t-1); & -1\leq s<0, & 0< t\leq 1\\
	(s+1)(t+1); & -1\leq s<0, & -1\leq  t<0\\
	(s-1)(t+1); & 0<s\leq 1, & -1\leq  t<0.
	\end{array}
	\right.
	$$

	See Figure~\ref{midpointphi} for a plot of $\phi$.

	Consider integration in the region $[0,1]\times[0,1]$.
	Using Proposition~\ref{propparts},
	\begin{align*}
	&\int_{0}^1\int_{0}^1 \ft(s,t)\,dt\,ds  =
	\ft(0,0)
	-\int_{0}^1\ft_s(s,0)(s-1)\,ds\\
	&\qquad-\int_{0}^1 \ft_t(0,t)(t-1)\,dt
	+\int_{0}^1\int_{0}^1 \ft_{st}(s,t)(s-1)
	(t-1)\,dt\,ds.
	\end{align*}
	There are similar formulas for the other three regions.
	We can then define
	$\gamma(x)=x+1$ for $x<0$ and $\gamma(x)=x-1$ for
	$x>0$.  Next we have
	\begin{align*}
	&\int_{-1}^1\int_{-1}^1 \ft(s,t)\,dt\,ds =4\ft(0,0)
	-2\int_{-1}^1 \ft_s(s,0)\gamma(s)\,ds\\
	&\qquad-2\int_{-1}^1 \ft_t(0,t)\gamma(t)\,dt
	+\int_{-1}^1\int_{-1}^1 \ft_{st}(s,t)\gamma(s)\gamma(t)\,
	dt\,ds.
	\end{align*}
	This gives 
	\begin{equation}
	\int_{-1}^1\int_{-1}^1 \ft(s,t)\,dt\,ds =4\ft(0,0)+E(\ft),
	\label{f(s,t)}
	\end{equation}
	where
	\begin{eqnarray*}
		E(\ft) & = & -2\int_{-1}^1 \ft_s(s,0)\gamma(s)\,ds
		-2\int_{-1}^1 \ft_t(0,t)\gamma(t)\,dt\\
		& & \quad+\int_{-1}^1\int_{-1}^1 \ft_{st}(s,t)\gamma(s)\gamma(t)\,
		dt\,ds.
	\end{eqnarray*}
	H\"older's inequality shows
	\begin{equation}
	\abs{E(\ft)}  \leq  2\norm{\ft_s(\cdot,0)}_p\norm{\gamma}_q+
	2\norm{\ft_t(0,\cdot)}_p\norm{\gamma}_q+
	\norm{\ft_{st}}_p\norm{\gamma}_q^2,\label{holderst}
	\end{equation}
	where the norms are now taken over $[-1,1]$ and 
	$[-1,1]\times[-1,1]$.
	
	Note that if $1\leq q<\infty$, then
	$$
	\norm{\gamma}_q 
	=  \left(\int_{-1}^0(1+s)^qds
	+\! \int_{0}^1(1-s)^qds
	\right)^{1/q}
	\!\!\!=  \left(2\!\int_{0}^1u^qdu\right)^{1/q}
	\!\!\!=  \left(\frac{2}{q+1}\right)^{1/q},
	$$
	and, $\norm{\gamma}_\infty=1$.
	
	The transformation $x=(b-a)s/2+m_1$ and 
	$y=(d-c)t/2+m_2$, maps the unit square onto
	$[a,b]\times[c,d]$.  Let $\ft(s,t)=f(x,y)$.
	In \eqref{f(s,t)},
	\begin{equation}
	\int_{-1}^1\int_{-1}^1 \ft(s,t)\,dt\,ds =\frac{4}{(b-a)(d-c)}
	\intabcd f(x,y)\,dy\,dx.
	\label{f(x,y)}
	\end{equation}
	For $1\leq p<\infty$, we also have
	\begin{eqnarray}
	\norm{\ft_s(\cdot,0)}_p & = & \left(\int_{-1}^1\abs{\ft_s(s,0)}^p
	\,ds\right)^{1/p}
	=  \left(\int_{a}^b\left|\frac{\partial f(x,m_2)}{\partial x}
	\frac{dx}{ds}\right|^p\,\frac{ds}{dx}\,dx\right)^{1/p}\notag\\
	& = & \norm{f_x(\cdot,m_2)}_p\left(\frac{dx}{ds}\right)^{1-1/p}
	=  \norm{f_x(\cdot,m_2)}_p\left(\frac{b-a}{2}\right)^{1-1/p}.
	\label{fsnorm}
	\end{eqnarray}
	And, 
	$$
	\norm{\ft_s(\cdot,0)}_\infty =\max_{\abs{s}\leq 1}
	\left|\ft_s(s,0)\right|=\max_{a\leq x\leq b}\left|f_x(x,m_2)
	\frac{dx}{ds}\right|=\norm{f_x(\cdot,m_2)}_\infty(b-a)/2.
	$$
	The other norms in \eqref{holderst} are handled similarly.
	
	Now putting \eqref{fsnorm} and these other results into
	\eqref{f(x,y)} and \eqref{holderst} gives the formulas
	in the theorem.
\end{proof}
\begin{corollary}[Midpoint Composite Rule]\label{cormidcomposite}
	Define a uniform partition of $[a,b]$ by $x_i=a+i\Delta x$ where
	$\Delta x=(b-a)/m$ for some $m\in\N$.  Then, for $0\leq i\leq m$,
	we have $a=x_0<x_1<\ldots<x_m=b$.
	Define a uniform partition of $[c,d]$ by $y_j=c+j\Delta y$ where
	$\Delta y=(d-c)/n$ for some $n\in\N$.  Then, for $0\leq j\leq n$,
	we have $c=x_0<y_1<\ldots<y_n=d$.  Let $m_i$ be the midpoint of $[x_{i-1},x_i]$,
	and $n_j$ be the midpoint of $[y_{j-1},y_j]$.  We write
	$$
	\intabcd f(x,y)\,dy\,dx=\sum_{i=1}^m\sum_{j=1}^n f(m_i,n_j)\frac{(b-a)(d-c)}{mn}+E(f).
	$$
	If
	$p=1$, then
	\begin{eqnarray*}
		\abs{E(f)} & \leq & \sum_{j=1}^n\norm{f_x(\cdot,n_j)}_1\frac{(b-a)(d-c)}{2n}
		+\sum_{i=1}^m\norm{f_y(m_i,\cdot)}_1\frac{(b-a)(d-c)}{2m}\\
		&   & \quad +\frac{\norm{f_{xy}}_1(b-a)(d-c)}{4}.
	\end{eqnarray*}
	If
	$1<p<\infty$, then
	\begin{eqnarray*}
		\abs{E(f)} & \leq & \sum_{j=1}^n\norm{f_x(\cdot,n_j)}_p\frac{(b-a)^{2-1/p}(d-c)}{2m^{1-1/p}n}
		+\sum_{i=1}^m\norm{f_y(m_i,\cdot)}_p\frac{(b-a)(d-c)^{2-1/p}}{2mn^{1-1/p}}\\
		&  & \quad +\frac{\norm{f_{xy}}_p(b-a)^{2-1/p}(d-c)^{2-1/p}}{4(mn)^{1-1/p}}\left(\frac{p-1}{2p-1}\right)^{2(1-1/p)}.
	\end{eqnarray*}
	If
	$p=\infty$, then
	\begin{eqnarray*}
		\abs{E(f)} & \leq & \sum_{j=1}^n\norm{f_x(\cdot,n_j)}_\infty\frac{(b-a)^2(d-c)}{4mn}
		+\sum_{i=1}^m\norm{f_y(m_i,\cdot)}_\infty\frac{(b-a)(d-c)^2}{4mn}\\
		&   & \quad +\frac{\norm{f_{xy}}_\infty(b-a)^2(d-c)^2}{4mn}.
	\end{eqnarray*}
	If $\abs{\nabla f}\leq M$ and $\abs{f_{xy}}\leq N$ for
	some $M, N\in \R$, then
	\begin{eqnarray*}
		\abs{E(f)} & \leq
		& \frac{M(b-a)^2(d-c)}{4m}+\frac{M(b-a)(d-c)^2}{4n}+\frac{N(b-a)^2(d-c)^2}{4mn}.
	\end{eqnarray*}
	
\end{corollary}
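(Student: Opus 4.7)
The plan is to reduce the composite rule to the single-rectangle Midpoint Rule (Theorem~\ref{theoremmidpoint}) applied on each of the $mn$ sub-rectangles $R_{ij}=[x_{i-1},x_i]\times[y_{j-1},y_j]$, each of side lengths $\Delta x=(b-a)/m$ and $\Delta y=(d-c)/n$ and midpoint $(m_i,n_j)$. Theorem~\ref{theoremmidpoint} applied to $R_{ij}$ gives
$$\int_{x_{i-1}}^{x_i}\!\int_{y_{j-1}}^{y_j}f(x,y)\,dy\,dx=f(m_i,n_j)\,\Delta x\,\Delta y+E_{ij},$$
with $|E_{ij}|$ controlled by the three terms of that theorem but with $(b-a),(d-c)$ replaced by $\Delta x,\Delta y$ and the norms restricted to $R_{ij}$. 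Summing in $i,j$ yields the composite quadrature formula and $E(f)=\sum_{i,j}E_{ij}$, hence $|E(f)|\le\sum_{i,j}|E_{ij}|$.

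To match the stated coefficients I use monotonicity of $L^p$ norms under inclusion: $\|f_x(\cdot,n_j)\|_{L^p[x_{i-1},x_i]}\le\|f_x(\cdot,n_j)\|_{L^p[a,b]}$ and likewise for $f_y$ and $f_{xy}$. Within each $|E_{ij}|$ replace every restricted norm by its full-domain counterpart; then summing over $i$ in the $f_x$ term contributes a factor of $m$ leaving $\sum_j\|f_x(\cdot,n_j)\|_p$, summing over $j$ in the $f_y$ term contributes a factor of $n$ leaving $\sum_i\|f_y(m_i,\cdot)\|_p$, and the $f_{xy}$ double sum contributes a factor of $mn$ against $\|f_{xy}\|_p$. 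Combining these with the inherited constants $(\Delta x)^{2-1/p}\Delta y/2$, $\Delta x(\Delta y)^{2-1/p}/2$, $(\Delta x\Delta y)^{2-1/p}/4$ and $((p-1)/(2p-1))^{1-1/p}$ from Theorem~\ref{theoremmidpoint}, and substituting $\Delta x=(b-a)/m$, $\Delta y=(d-c)/n$, produces the three $1<p<\infty$ coefficients $(b-a)^{2-1/p}(d-c)/(2m^{1-1/p}n)$, $(b-a)(d-c)^{2-1/p}/(2mn^{1-1/p})$, and $(b-a)^{2-1/p}(d-c)^{2-1/p}/(4(mn)^{1-1/p})$.

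The $p=1$ and $p=\infty$ bounds follow either by direct evaluation using $q=\infty$ and $q=1$ respectively, or by taking limits in the $1<p<\infty$ formula, exactly as at the end of the proof of Theorem~\ref{theoremmidpoint}. The concluding gradient estimate is immediate from the $p=\infty$ bound: use $\|f_x(\cdot,n_j)\|_\infty,\|f_y(m_i,\cdot)\|_\infty\le M$ and $\|f_{xy}\|_\infty\le N$, so the sums $\sum_j$ and $\sum_i$ reduce to factors of $n$ and $m$. The only real obstacle is bookkeeping---tracking the powers of $m$, $n$, $\Delta x$ and $\Delta y$ as they migrate between the Theorem~\ref{theoremmidpoint} constants and the collapsed sums---but no new analytic idea beyond Theorem~\ref{theoremmidpoint} itself is required.
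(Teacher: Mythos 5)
Your proposal is correct and does establish every inequality in the corollary: applying Theorem~\ref{theoremmidpoint} cell-by-cell, enlarging each local norm to the corresponding global norm, and summing reproduces the printed constants (indeed your $1<p<\infty$ bounds for the first two terms retain an extra factor $\left(\frac{p-1}{2p-1}\right)^{1-1/p}\le 1$, and your $p=\infty$ bound for the $f_{xy}$ term comes out with $16mn$ rather than $4mn$ in the denominator, so your estimates are at least as strong as, and in places strictly stronger than, the stated ones). However, this is not how the paper proceeds. The paper never invokes Theorem~\ref{theoremmidpoint}; it builds one global piecewise weight, $\phi(x,y)=\gamma_i(x)\delta_j(y)$ on each cell, applies Proposition~\ref{propparts} on each cell to get the quadrature formula, and then applies H\"older's inequality \emph{once} over all of $[a,b]$ (respectively all of $\Omega$) against the piecewise functions $\gamma=\sum_i\gamma_i\chi_{(x_{i-1},x_i)}$ and $\Gamma(x,y)=\gamma_i(x)\delta_j(y)$, computing $\norm{\gamma}_{L^q[a,b]}$ and $\norm{\Gamma}_{L^q(\Omega)}$ in closed form. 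The distinction is not cosmetic: a single H\"older application to the summed integral is tighter than summing $mn$ local H\"older bounds with local norms inflated to global ones (for instance, for the $f_{xy}$ term with $1<p<\infty$ the exact global computation produces a factor $\frac{1}{4mn}$ where the local-then-sum route produces $\frac{1}{4(mn)^{1-1/p}}$). The constants actually printed happen to match what your cruder route yields, so your proof is a valid proof of the corollary as stated; what you gain is modularity --- the composite rule becomes a literal corollary of the single-cell theorem --- and what you give up is the extra sharpness that the single global H\"older estimate can extract.
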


\begin{proof}
	Define
	$$
	\phi(x,y)=\left\{\begin{array}{cl}
	(x-x_i)(y-y_j), & (x,y)\in(m_i,x_i)\times(n_j,y_j)\\
	(x-x_{i-1})(y-y_j), & (x,y)\in(x_{i-1},m_i)\times(n_j,y_j)\\
	(x-x_{i-1})(y-y_{j-1}), & (x,y)\in(x_{i-1},m_i)\times(y_{j-1},n_j)\\
	(x-x_i)(y-y_{j-1}), & (x,y)\in(m_i,x_i)\times(y_{j-1},n_j),
	\end{array}
	\right.
	$$
	where 
	$$
	\gamma_i(x)=\left\{\begin{array}{cl}
	x-x_i, & \text{ if } x\in(m_i,x_i) \text{ for some } 1\leq i\leq m\\
	x-x_{i-1}, & \text{ if } x\in(x_{i-1},m_i)\text{ for some } 1\leq i\leq m\\
	0, & \text{ otherwise,}
	\end{array}
	\right.
	$$
	$$
	\delta_j(y)=\left\{\begin{array}{cl}
	y-y_j, & \text{ if } y\in(n_j,y_y) \text{ for some } 1\leq j\leq n\\
	y-y_{j-1}, & \text{ if } y\in(y_{j-1},n_j)\text{ for some } 1\leq j\leq n\\
	0, & \text{ otherwise.}
	\end{array}
	\right.
	$$
	
	Applying Proposition~\ref{propparts} to each of the four regions gives
	\begin{eqnarray}
	\int_{x_{i-1}}^{x_i}\int_{y_{j-1}}^{y_j}f(x,y)\,dy\,dx\!\!\! & =\!\!\! & \frac{f(x_i,y_j)(b-a)(d-c)}{mn}\label{midcomp1}\\
	&  &\quad- \frac{d-c}{n}\int_{x_{i-1}}^{x_i}f_x(x,n_j)\gamma_i(x)\,dx\label{midcomp2}\\
	&  &\quad- \frac{b-a}{m}\int_{y_{j-1}}^{y_y}f_y(m_i,y)\delta_j(y)\,dy\label{midcomp3}\\
	&  &\quad+\! \int_{x_{i-1}}^{x_i}\int_{y_{j-1}}^{y_j}f_{xy}(x,y)\gamma_i(x)\delta_j(y)\,dy\,dx.\label{midcomp4}
	\end{eqnarray}
	Summing over $i$ and $j$, \eqref{midcomp1} gives the integral approximation.
	
	Let $\chi_I$ be the characteristic function of interval $I$, that is $\chi_I(x)=1$ if $x\in I$ and $0$ otherwise.
	
	From \eqref{midcomp2}, with H\"older's inequality,
	\begin{align*}
	&\frac{d-c}{n}\left|\sum_{i=1}^m\sum_{j=1}^n\int_{x_{i-1}}^{x_i}f_x(x,n_j)\gamma_i(x)\,dx\right|\\
	&\qquad\leq\frac{d-c}{n}\sum_{j=1}^n\int_{a}^{b}\abs{f_x(x,n_j)\gamma_i(x)\chi_{(x_{i-1},x_i)}(x)}\,dx\\
	&\qquad\leq\frac{d-c}{n}\sum_{j=1}^n\norm{f_x(\cdot,n_j)}_p\norm{\gamma_i(x)\chi_{(x_{i-1},x_i)}}_q.
	\end{align*}
	Note that
	\begin{eqnarray*}
		\norm{\gamma_i(x)\chi_{(x_{i-1},x_i)}}_q & = & \left(\sum_{i=1}^m\int_{x_{i-1}}^{m_i}\abs{x-x_{i-1}}^q\,dx
		+\int_{m_{i}}^{x_i}\abs{x-x_{i}}^q\,dx\right)^{1/q}\\
		& = & \left(2\sum_{i=1}^m\int_0^{\Delta x/2}x^q\,dx\right)^{1/q}\\
		& = & \left\{\begin{array}{cl}
			\frac{b-a}{2}, & p=1\\
			\frac{
				(b-a)^{2-1/p}}{2m^{1-1/p}}\left(\frac{p-1}{2p-1}
			\right)^{1-1/p}, & 1<p<\infty\\
			\frac{(b-a)^2}{4m}, & p=\infty.
		\end{array}
		\right.
	\end{eqnarray*}
	
	Equation \eqref{midcomp3} is handled similarly.
	
	With \eqref{midcomp4} we let $\Gamma(x,y)=\gamma_i(x)\delta_j(y)$ if $(x,y)\in(x_{i-1},x_{i})\times
	(y_{j-1},y_j)$ for some $i$ and $j$, and $\Gamma$ is zero otherwise.  Then
	\begin{eqnarray*}
		\left|\sum_{i=1}^m\sum_{j=1}^n\int_{x_{i-1}}^{x_i}\int_{y_{j-1}}^{y_j}f_{xy}(x,y)\gamma_i(x)\delta_j(y)\,dy\,dx\right|
		& \leq & \intab\abs{f_{xy}(x,y)\Gamma(x,y)}\,dy\,dx\\
		& \leq & \norm{f_{xy}}_p\norm{\Gamma}_q,
	\end{eqnarray*}
	and
	\begin{eqnarray*}
		\norm{\Gamma}_q & = & \left(\sum_{i=1}^m\sum_{j=1}^n\int_{x_{i-1}}^{x_i}\abs{\gamma_i(x)}^q\,dx\int_{y_{j-1}}^{y_j}
		\abs{\delta_j(y)}^q\,dy\right)^{1/q}\\
		& = & \left(4\sum_{i=1}^m\sum_{j=1}^n\int_0^{\Delta x/2}x^q\,dx\int_0^{\Delta y/2}y^q\,dy\right)^{1/q}\\
		& = & \left\{\begin{array}{cl}
			\frac{(b-a)(d-c)}{4}, & p=1\\
			\frac{
				(b-a)^{2-1/p}(d-c)^{2-1/p}}{4(mn)^{1-1/p}}\left(\frac{p-1}{2p-1}
			\right)^{2(1-1/p)}, & 1<p<\infty\\
			\frac{(b-a)^2(d-c)^2}{4mn}, & p=\infty.
		\end{array}
		\right.
	\end{eqnarray*}
\end{proof}

\begin{remark}
	At the end of Corollaries~\ref{trapcomprule} and \ref{cormidcomposite}
	we have estimates for the error in the trapezoidal and midpoint composite
	rules  under the assumptions
	$\abs{\nabla f}\leq M$ and $\abs{f_{xy}}\leq N$ for
	some $M, N\in \R$.  If we take partitions with equal number of intervals
	in the $x$ and $y$ direction ($m=n$) then the error estimates for both
	composite rules are $E(f)=O(1/n)$ as $n\to\infty$.
	
	Note that only under the assumptions that $\norm{f_x(\cdot,y)}_p$ is uniformly bounded
	for $c\leq y\leq d$ and $\norm{f_y(x,\cdot)}_p$ is uniformly bounded for $a\leq x\leq b$
	the trapezoidal rule has a better error estimate ($E(f)=O(1/n)$) than the
	midpoint rule ($E(f)=O(1/n^{1-1/p})$).
\end{remark}

\section{Minimizing error estimates}\label{sectionminimize}
The error estimate in Proposition~\ref{propholder} depends on $\norm{\phi}_q$,
where $\phi(x,y)=xy+\alpha(x)+\beta(y)$.  We needed to choose particular
functions $\alpha$ and $\beta$ to generate the
trapezoidal rule (Theorem~\ref{theoremtrapezoidal}) and the
midpoint rule (Theorem~\ref{theoremmidpoint}).  A natural question is:
how can $\alpha$ and $\beta$ be chosen to minimize $\norm{\phi}_q$?
As we see below, if $1<q<\infty$, there is a unique function of this type
that minimizes the norm
of $\phi$ and this is the same $\phi$ as in the trapezoidal rule of 
Theorem~\ref{theoremtrapezoidal}.  If $q=\infty$
the minimizer is not unique but the minimum norm is the same as in the
trapezoidal rule.  For $q=1$ we find the minimum norm but know nothing
about uniqueness of the minimizing function.

First note that in a normed linear space $X$ with norm $\norm{\cdot}$,
if $x_i\in X$ are linearly independent and $z\in X$, then the problem
of finding $a_i\in\R$ to minimize $\norm{z-a_1x_1-a_2x_2-\cdots-a_nx_n}$
has a solution for each $n\in\N$.  This is called the problem of
best approximation.  For example, \cite[Theorem~7.4.1]{davis}.
Whether this problem has a unique solution depends on the notion of a {\it strictly convex} normed
linear space:
$X$ is strictly convex if for all $x,y\in X$ with $\norm{x}=\norm{y}=1$ and
$x\not=y$ we have $\norm{(x+y)/2}<1$.
Geometrically,
this means the surface of a ball contains no line segments.
It is known that for $1<p<\infty$ the spaces $L^p([-1,1]\times[-1,1])$ are
strictly convex and are not strictly convex if $p=1$ or if  $p=\infty$.  See
\cite[p.~112, exercise~3]{rudin}.  If the elements $x_i$ are linearly
independent in $X$, and $X$ is strictly convex, then the best approximation
problem has a unique solution \cite[Theorem~7.5.3]{davis}.
\begin{theorem}\label{theoremminimizers}
	Define $\phi\fn[a,b]\times[c,d]\to\R$ by $\phi(x,y)=xy+\alpha(x)+\beta(y)$
	where $\alpha$ and $\beta$ are functions of one variable in $L^q([-1,1])$.
	The minimum of $\norm{\phi}_q$, by varying $\alpha$ and $\beta$, is 
	$$
	\norm{\phi}_q=\left\{\begin{array}{cl}
	\left(\frac{2}{q+1}\right)^{2/q}, & 1< q<\infty\\
	1, & q=1 \text{ or } \infty.
	\end{array}
	\right.
	$$
	If $1< q<\infty$, then the unique minimum is given by $\phi(x,y)=(x-m_1)(y-m_2)$ where
	$m_1$ is the midpoint of $[a,b]$, and $m_2$ is
	the midpoint of $[c,d]$.  If $q=1$, or $q=\infty$,
	the minimum is achieved by more than
	one function, but $\norm{\phi}_\infty=1$ with $\phi(x,y)=(x-m_1)(y-m_2)$.
\end{theorem}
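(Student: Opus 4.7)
The plan is to reduce to the symmetric square $[-1,1]\times[-1,1]$ by a linear change of variables (after which the three stated minimum values are exactly those for the normalized problem), and then to prove the lower bound by a reflection/averaging identity, with strict convexity of $L^q$ doing the uniqueness work. I would start by computing $\norm{xy}_q$ on $[-1,1]^2$ directly: Fubini gives $\norm{xy}_q^q = \bigl(\int_{-1}^1 \abs{t}^q\,dt\bigr)^2 = \bigl(2/(q+1)\bigr)^2$ for $1\leq q<\infty$ and $\norm{xy}_\infty = 1$, matching the values in the theorem (and giving $1$ at $q=1$). A substitution $u=2(x-m_1)/(b-a)$, $v=2(y-m_2)/(d-c)$ reduces the general rectangle to the unit square and preserves the form $xy+\alpha(x)+\beta(y)$.

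The heart of the proof is the lower bound $\norm{xy}_q\leq\norm{\phi}_q$ for every admissible $\phi(x,y)=xy+\alpha(x)+\beta(y)$. I would use the two reflections $T_1\phi(x,y)=\phi(-x,y)$ and $T_2\phi(x,y)=\phi(x,-y)$, both isometries of $L^q([-1,1]^2)$. A direct expansion gives the identity
\begin{equation*}
\phi(x,y)-\phi(-x,y)-\phi(x,-y)+\phi(-x,-y)=4xy,
\end{equation*}
because the four $\alpha$-contributions cancel in pairs ($\alpha(x)-\alpha(-x)-\alpha(x)+\alpha(-x)=0$), similarly for $\beta$, and the four $xy$-terms reinforce. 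The triangle inequality combined with $\norm{T_i\phi}_q=\norm{\phi}_q$ then yields $4\norm{xy}_q\leq 4\norm{\phi}_q$.

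For uniqueness when $1<q<\infty$, I would invoke strict convexity of $L^q([-1,1]^2)$. If $\phi$ attains the minimum, the midpoint $\tfrac{1}{2}(\phi+xy)=xy+\alpha/2+\beta/2$ is again admissible, so by the lower bound $\norm{\tfrac{1}{2}(\phi+xy)}_q\geq\norm{xy}_q=\norm{\phi}_q$. Strict convexity forces $\phi=xy$ in $L^q$, i.e.\ $\alpha=\beta=0$ almost everywhere. For $q=1$ and $q=\infty$ strict convexity fails; the minimum value is still $\norm{xy}_q$, but uniqueness can be destroyed. I would illustrate this at $q=\infty$ with the explicit second minimizer $\phi(x,y)=xy+(x^2-1)/2$: for fixed $x\in[-1,1]$ this is linear in $y$, so $\max_{\abs{y}\leq 1}\abs{\phi(x,y)}=\abs{x}+(x^2-1)/2=(\abs{x}+1)^2/2-1\leq 1$, with equality at $\abs{x}=1$.

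The main obstacle I anticipate is purely presentational rather than mathematical: recording the normalization carefully so that the minimum values in the theorem correspond to the unit-square computation, and pinning down clean non-uniqueness examples at the extremes $q=1,\infty$. The core of the argument is the one-line averaging identity, and everything else follows from the triangle inequality together with the strict-convexity characterization of best approximation recalled in the paragraph preceding the theorem.
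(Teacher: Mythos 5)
Your proof is correct, but the route to the lower bound is genuinely different from (and cleaner than) the paper's. The paper splits into three cases: for $1<q<\infty$ it invokes uniqueness of best approximation in a strictly convex space, proves linear independence of the even and odd parts of $\alpha$ and $\beta$, and uses the reflections $(s,t)\mapsto(-s,t)$ and $(s,t)\mapsto(s,-t)$ only to force the unique minimizing coefficients to vanish; for $q=\infty$ it runs a separate corner-value argument; and for $q=1$ it approximates by continuous functions and passes to the limit $q\to1^+$. Your identity
\begin{equation*}
\phi(x,y)-\phi(-x,y)-\phi(x,-y)+\phi(-x,-y)=4xy,
\end{equation*}
together with the triangle inequality and the fact that the reflections are isometries of $L^q([-1,1]^2)$, gives $\norm{xy}_q\leq\norm{\phi}_q$ for every $1\leq q\leq\infty$ in one stroke, eliminating all three case distinctions for the minimum value; strict convexity is then needed only where the theorem actually asserts uniqueness, and your midpoint argument ($\norm{(\phi+xy)/2}_q<\norm{xy}_q$ if $\phi\neq xy$, contradicting admissibility of $xy+\alpha/2+\beta/2$) is a clean way to extract it. The conclusion $\alpha(x)+\beta(y)=0$ a.e.\ forces both functions to be a.e.\ constant, so $\phi=xy$ as an element of $L^q$, i.e.\ $(x-m_1)(y-m_2)$ on the original rectangle. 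One minor slip: your expression for $\max_{\abs{y}\leq1}\abs{xy+(x^2-1)/2}$ has the wrong sign on the constant term --- the maximum of $\abs{ay+b}$ over $[-1,1]$ is $\abs{a}+\abs{b}$, so the correct value is $\abs{x}+(1-x^2)/2$; this is still increasing in $\abs{x}$ with maximum $1$ at $\abs{x}=1$, so your non-uniqueness example at $q=\infty$ stands (the paper uses $st-\abs{s}+\abs{t}$ instead). Like the paper, you exhibit no second minimizer at $q=1$; the paper itself concedes it knows of none, so nothing is lost there.
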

\begin{proof}
	It suffices to consider $[a,b]\times[c,d]=[-1,1]\times[-1,1]$, and then a
	linear transformation can be used to map the unit square onto $[a,b]\times[c,d]$.
	
	Let $\psi(x,y)=xy$.
	
	If $1< q<\infty$ then the $q$-norm is strictly convex.  By the paragraph preceding the
	theorem, this means that if $x_i$ are fixed linearly independent functions
	in $L^q([-1,1]^2)$, then for each $n\in\N$ the problem of choosing $a_i\in\R$ to minimize
	$\norm{\psi+a_1x_1+\ldots +a_nx_n}_q$ has a unique solution.  In our problem, the functions $x_i$ 
	are functions of one variable.  We first need a result on linear independence.
	
	Suppose $\alpha_e$, $\alpha_o$, $\beta_e$, and $\beta_o$ are, respectively, non-constant even and odd functions
	of one variable.  We claim that the set of functions $\{\alpha_e(s), \alpha_o(s), \beta_e(t),\\ \beta_o(t)\}$ is
	linearly independent on $[-1,1]^2$.  Suppose 
	$\lambda_1\alpha_e(s)+ \lambda_2\alpha_o(s)+ \lambda_3\beta_e(t)+\lambda_4\beta_o(t)=0$ for all
	$(s,t)\in [-1,1]^2$ for some constants $\lambda_i$.  Then
	$\lambda_1\alpha_e(s)+ \lambda_2\alpha_o(s)=- \lambda_3\beta_e(t)-\lambda_4\beta_o(t)$.
	Since $s$ and $t$ can be varied independently this shows existence of a constant $k$ so that
	$\lambda_1\alpha_e(s)+ \lambda_2\alpha_o(s)=- \lambda_3\beta_e(t)-\lambda_4\beta_o(t)=k$ for
	all $s$ and $t$.  Let $s\not=0$.  Then $\lambda_1\alpha_e(s)- \lambda_2\alpha_o(s)=k$.  
	Adding gives $\lambda_1\alpha_e(s)=k$. Since $\alpha_e$ is not constant we must have 
	$\lambda_1=k=0$.  Subtracting the equations now gives $\lambda_2\alpha_o(s)=0$ and $\alpha_2$ is not 
	constant so $\lambda_2=0$.  Similarly, $\lambda_3=\lambda_4=0$ and the functions are linearly 
	independent.
	
	With the functions $\alpha_e$, $\alpha_o$, $\beta_e$ and $\beta_o$ fixed as above consider the
	expression 
	\begin{align*}
	&\norm{\psi+a_1\alpha_e+a_2\alpha_o+a_3\beta_e+a_4\beta_o}_q^q\\
	&\quad=\int_{-1}^1\int_{-1}^1\abs{st+a_1\alpha_e(s)+a_2\alpha_o(s)+a_3\beta_e(t)+a_4\beta_o(t)}^q\,dt\,ds,
	\end{align*}
	where $a_1$, $a_2$, $a_3$, $a_4$ are the unique constants that give the minimum.  Changing variables
	$(s,t)\mapsto (-s,t)$ in the integral gives
	$$
	\norm{\psi+a_1\alpha_e+a_2\alpha_o+a_3\beta_e+a_4\beta_o}_q^q=
	\norm{\psi-a_1\alpha_e+a_2\alpha_o-a_3\beta_e-a_4\beta_o}_q^q.
	$$
	But the coefficients are unique so $a_1=-a_1$, $a_3=-a_3$ and $a_4=-a_4$.  Hence, these coefficients
	are $0$.
	The change of variables $(s,t)\mapsto (s,-t)$ in the integral now shows $a_2=0$.  Therefore,
	for any set of fixed even and odd functions of one variable the minimum of  
	$\norm{\psi+a_1\alpha_e+a_2\alpha_o+a_3\beta_e+a_4\beta_o}_q$ is $\norm{\psi}_q$.
	
	Now we show that we get the same result when we vary the functions.  Suppose $\alpha$ and $\beta$ are
	any fixed functions in $L^q([-1,1])$.  The even part of $\alpha$ is $\alpha_e(s)=(\alpha(s)+\alpha(-s))/2$
	and the odd part is $\alpha_o(s)=(\alpha(s)-\alpha(-s))/2$.  Similarly with $\beta$.  Again, using the
	convention that $\alpha$ functions are evaluated at the first variable and $\beta$ functions at
	the second variable, we have
	\begin{eqnarray*}
		\min_{c_1,c_2\in\R}\norm{\psi+c_1\alpha+c_2\beta}_q & = & \min_{c_1,c_2\in\R}\norm{\psi+c_1\alpha_e
			+c_1\alpha_o+c_2\beta_e+c_2\beta_o}_q\\
		& \geq & \min_{a_i\in\R}\norm{\psi+a_1\alpha_e+a_2\alpha_o+a_3\beta_e+a_4\beta_o}_q=\norm{\psi}_q.
	\end{eqnarray*}
	But taking $c_1=c_2=0$ gives $\norm{\psi}_q$ in $\norm{\psi+c_1\alpha+c_2\beta}_q$ so this is its
	minimum as well.
	
	Suppose there were functions $\xi,\eta\in L^q([-1,1])$ so that if $\xi$ is evaluated at the first
	variable and $\eta$ is evaluated at the second variable then $\norm{\psi+\xi+\eta}_q<\norm{\psi}_q$.
	Then 
	$$
	\norm{\psi+\xi+\eta}_q=\norm{\psi+1\xi+1\eta}_q\geq \min_{c_1,c_2\in\R}\norm{\psi+c_1\xi+c_2\eta}_q
	=\norm{\psi}_q.
	$$
	This contradiction shows that
	$$
	\min_{\alpha,\beta\in L^q([-1,1])}\norm{\psi+\alpha+\beta}_q=\norm{\psi}_q.
	$$
	
	The norm is computed following \eqref{holderst}.
	
	Now consider $q=\infty$.  The maximum of $\psi(x,y)=xy$ on $[-1,1]\times[-1,1]$ is $\psi(1,1)=\psi(-1,-1)=1$
	and the minimum is $\psi(1,-1)=\psi(-1,1)=-1$.  Hence, $\norm{\psi}_\infty=1$.  For
	$\phi(s,t)=st+\alpha(s)+\beta(t)$ to have $\norm{\phi}_\infty\leq\norm{\psi}_\infty$,
	we must have
	\begin{eqnarray}
	\alpha(1)+\beta(1) & \leq & 0,\label{ab1}\\
	\alpha(-1)+\beta(-1) & \leq & 0,\label{ab2}\\
	\alpha(1)+\beta(-1) & \geq & 0,\label{ab3}\\
	\alpha(-1)+\beta(1) & \geq & 0.\label{ab4}
	\end{eqnarray}
	This is because the maximum of $\psi$ is positive, and the minimum is negative.  And,
	\begin{eqnarray*}
		\eqref{ab1} \text{ and } \eqref{ab3} & \text{give} & \beta(1)-\beta(-1)\leq 0,\\
		\eqref{ab2} \text{ and } \eqref{ab4} & \text{give} & \beta(-1)-\beta(1)\leq 0;
	\end{eqnarray*}
	hence $\beta(1)=\beta(-1)$.  Similarly, $\alpha(1)=\alpha(-1)$.  Equations \eqref{ab1} and
	\eqref{ab3} now show $0\leq\alpha(1)+\beta(1)\leq 0$ and so $\alpha(1)+\beta(1)=0$.  We then
	get $\alpha(1)=\alpha(-1)=-\beta(1)=-\beta(-1)$.  But then
	\begin{eqnarray*}
		\phi(1,1) & = & 1+\alpha(1)+\beta(1)=1\\
		\phi(-1,-1) & = & 1+\alpha(-1)+\beta(-1)=1\\
		\phi(1,-1) & = & -1+\alpha(1)+\beta(-1)=-1\\
		\phi(-1,1) & = & -1+\alpha(-1)+\beta(1)=-1.
	\end{eqnarray*}
	This shows $\norm{\phi}_\infty\geq 1 =\norm{\psi}_\infty$, so
	$\min_{\alpha,\beta}\norm{\phi}_\infty=\norm{\psi}_\infty=1$.
	
	Now consider $q=1$.  Given $\epsilon>0$, for each $\alpha,\beta\in L^1([-1,1])$, there are continuous
	functions $\alphabar,\betabar$ such that 
	$\abs{\norm{\psi+\alpha+\beta}_1-
		\norm{\psi+\alphabar+\betabar}_1}<
	\epsilon$.  Then $\alphabar,\betabar\in L^q([-1,1])$ for each
	$1\leq q\leq \infty$ so
	\begin{eqnarray*}
		\norm{\psi+\alpha+\beta}_1 & \geq & \norm{\psi+\alphabar+\betabar}_1
		-\epsilon\\
		& = & \lim_{q\to 1^+}\norm{\psi+\alphabar+\betabar}_q-\epsilon\\
		& \geq & \lim_{q\to 1^+}\norm{\psi}_q-\epsilon\\
		& = & \lim_{q\to 1^+}\left(\int_{-1}^1\int_{-1}^1\abs{st}^q\,dt\,ds\right)^{1/q}-\epsilon\\
		& = & \lim_{q\to 1^+}\left(2\int_{0}^1s^q\,ds\right)^{2/q}-\epsilon\\
		& = &  \lim_{q\to 1^+}\left(\frac{2}{q+1}\right)^{2/q}-\epsilon\\
		& = & 1-\epsilon.
	\end{eqnarray*}
	Therefore, since $\epsilon>0$ is arbitrary,
	$$
	\min_{\alpha,\beta\in L^1([-1,1])}\norm{\psi+\alpha+\beta}_1\geq 1.
	$$
	But,
	$$
	\norm{\psi +0\alpha+0\beta}_1
	=\int_{-1}^1\int_{-1}^1\abs{st}\,dt\,ds
	=4\left(\int_0^1s\,ds\right)^2=1.
	$$
	Hence, 
	$$
	\min_{\alpha,\beta\in L^1([-1,1])}\norm{\psi+\alpha+\beta}_1= 1.
	$$
	
	An example that shows the minimizing function is not unique when
	$q=\infty$ is $\phi(s,t)=st-\abs{s}+\abs{t}$.  The gradient 
	does not vanish in any of the four open regions $(0,1)\times(0,1)$,
	$(-1,0)\times(0,1)$, $(-1,0)\times(-1,0)$ or $(0,1)\times(-1,0)$.
	The extreme values are then on the $s$-axis for $\abs{s}\leq 1$, on the $t$-axis
	for $\abs{t}\leq 1$, on
	one of the line segments given by
	$\abs{s}=1$, or on one of the line segments given by $\abs{t}=1$.  It is then seen that the 
	maxima and minima on these line segments are $1$ and $-1$.  Hence,
	$\norm{\phi}_\infty=1$.  Further examples with unit norm can
	be obtained by considering $\phi(s,t)=st\pm u\abs{s}^v\mp u\abs{t}^v$
	for $u,v>0$.  A linear transformation then maps the unit square onto
	$[a,b]\times[c,d]$.
\end{proof}
We do not know of an example of non-uniqueness of the minimizing
function when $q=1$.

An approach to the proof for $q=2$ that does not require facts about the uniform
convexity of the norm is the following.  Note that
\begin{align*}
&\int_{-1}^1\int_{-1}^1\abs{st+\alpha(s)+\beta(t)}^2\,dt\,ds\\
&\qquad=\int_{-1}^1\int_{-1}^1\left\{s^2t^2+2st\alpha(s)+2st\beta(t)+[\alpha(s)+\beta(t)]^2\right\}\,dt\,ds\\
&\qquad=\int_{-1}^1\int_{-1}^1\left\{s^2t^2+[\alpha(s)+\beta(t)]^2\right\}\,dt\,ds.
\end{align*}
The norm of $\phi$ is then minimized when $\alpha(s)=-\beta(t)$.  Then
$\alpha$ and $\beta$ are both
constant so the minimizer is $\phi(s,t)=st$.

\section{Acknowledgments}

The first author was partially funded by a work/study grant from the University of the Fraser Valley.

\end{document}